\theoremstyle{plain}
\newtheorem{theorem}{Theorem}[section]
\theoremstyle{definition}
\newtheorem {definition}{Definition}[section]
\theoremstyle{remark}
\newtheorem{remark}{Remark}[section]
\theoremstyle{plain}
\theoremstyle{plain}
\theoremstyle{plain}
\newtheorem{corollary}{Corollary}[section]
\theoremstyle{plain}
\theoremstyle{plain}
\newtheorem{lemma}{Lemma}[section]
\def\R{\mathbb{R}}
\def\N{\mathbb{N}}
\def\mo{p}
\title{Microlocal Analysis of Waves Across the Event Horizon  of an Extremal Rotating Black Hole}
\author{Antti Kujanp\"a\"a}
\address{
Jockey Club Institute for Advanced Study,  \\
Lo Ka Chung Building, Lee Shau Kee Campus, The Hong Kong University of Science and Technology, \\
Clear Water Bay, Kowloon, 
HKSAR, China, \ 
email: iaskujanpaa@ust.hk, \\
LUT University, School of Engineering Science, PO Box 20, FI-53851
Lappeenranta, Finland, \\
email: antti.kujanpaa@lut.fi}
\date{\today}
\begin{document}

 \maketitle
 \begin{abstract}
Static black holes 
contain regions of spacetime which not even light can escape from.
In the centre of mass frame, these blocks are separated from each other by event horizons. 
Unlike pointlike particles, fields can spread and interact non-causally across the horizons. 
The microlocal theory   
of this is somewhat incomplete, however.    
For instance, the theory of real principal type operators does not apply on the horizon.
In this article, we address this issue for the extremal rotating black hole. 
Namely, we show that null covectors on the horizon of the extremal Kerr spacetime form an involutive double characteristic manifold 
and then extend the construction of parametrix across the event horizon. 
This provides a mathematical basis for the asymptotic oscillatory solutions in the region. Such approximations are central in quantum mechanics. 
In contrast to the real principal case, solutions near the horizon admit two channels for propagation of singularities. 
Indeed, there is additional propagation along the double characteristic variety.

\end{abstract}

\section{Introduction}


Free fall of pointlike particles in general relativity is described by causal geodesics. 
Near a black hole, these trajectories tend to curve towards the singularity behind the event horizon. 
Paradoxically, an external observer never witnesses these particles reaching the interior. 
Instead, he (or she) sees them asymptotically piling up on the outer horizon, eventually forming thin layers (holograms) of information. 
From this perspective, each causal trajectory in the spacetime lies strictly in the exterior, interior, or on the horizon of the black hole and there is no overlapping between these three classes. \footnote{The horizon may be crossed in the particle's own frame of reference. For an external observer, the crossing would take infinitely long time. Notice that this is not due to degeneracy of local coordinates used but rather a more fundamental issue in representing the two perspectives simultaneously in a single smooth framework.
Indeed, the ``coordinate'' transformation from the particle's own frame (e.g. Kerr star coordinates) to the external observer's frame (e.g. Boyer-Lindquist coordinates) is singular. 
}
This does not apply to fields  (quantum, electromagnetic, etc.) however which spread to the surrounding space. 
For example, the quantum description of a spinless particle as an excitation in a Klein-Gordon field allows the probability density to distribute spatially across the horizon. Notice that the volume form $dV_g$ remains regular and non-zero on the horizon, at least for the known black holes. That is; the form does not ``see'' the horizon. Hence, the probability density as a measure $u dV_g$ (Hodge dual) has the same regularity and zeros as $u$.\footnote{Indeed, the volume form cannot erase the density or create irregularitites. That is; the Hodge star $*: u\mapsto u dV_g$ is elliptic.} 
The aforementioned wave functions are fundamental in studying quantum phenomena near the horizon. For instance, Hawking radiation is understood as pair production in this region. 
Yet there is a fundamental mathematical problem: 
The standard microlocal methods do not apply to fields on the horizon. 
This implies that the asymptotic oscillatory approximations used in physics to construct solutions (e.g. propagators) cannot be extended to the horizon in the usual sense. 
{\color{black} For a sub-extremal black hole, one may resolve the issue by applying techniques developed in \cite{2011AnHP...12.1349W}, \cite{articledy}, \cite{Dyatlov2013ResonancePA}, \cite{Vasy2010MicrolocalAO} \cite{hintz2021normally}, \cite{jia2022propagation}, \cite{jia2023applications}.}
The extremal case is fundamentally different, however. 
The main point in this article is to show that the problem can be resolved (away from the normal of the horizon) for the extremal rotating black hole using the theory of operators with double characteristics. 
Indeed, we shall show that the radial points become double characteristics  ($ \sigma (q,\mo)  = 0 = d_{q,\mo} \sigma (q,\mo) $) at the extremal limit, and then apply the result from \cite{Uhlmann-Thesis} to solve  the Cauchy problem microlocally. Subsequently, the microlocal inverse of the operator is derived via Duhamel's principle.  The solutions admit two independent channels for propagation of singularities. Namely, the standard one along light cones in the real principal type regions, and an additional one on the horizon for double characteristics. Surprisingly, the extra propagation is not the limit of the standard one but an actual canonical relation with more structure. 
In particular, the dimension of the additional manifold is the same as for the standard one outside the event horizon.\footnote{The dimension of a canonical relation is half the dimension of the ambient space.} 
This is in line with the holographic principle \cite{susskind} (see also the AdS/CFT correspondence \cite{Maldacena}) which postulate a one-to-one correspondence between the exterior and the horizon. 

As indicated above, this article is about waves in the spacetime of a rotating black hole. We take the perspective of an external observer outside the black hole and focus on the extremal case of angular velocity $a = r_s/2 = G M_{BH} /c^2$.
This is a Kerr black hole with the minimum possible mass $M_{BH} = M_{BH}(a)$, or equivalently, the maximal speed of rotation $a= a(M_{BH})$.\footnote{Superextremal  ($a > G M_{BH} /c^2$) Kerr solutions do not have event horizons. They are widely considered as non-physical.} 
We construct the microlocal parametrix for the Cauchy problem on a neighbourhood that  
crosses the event horizon of the black hole. As a corollary, the associated microlocal inverse\footnote{Confusingly, this is called parametrix too.} (approximate Green's function) of the operator is obtained. 
The microlocal properties of waves on the horizon are determined. 
In particular, we compute explicitly how the wave front set evolves on the horizon. 

Let $\square_g : u \mapsto  \frac{1}{\sqrt{- \det g}} \partial_\mu (\sqrt{- \det g} g^{\mu \nu} \partial_\nu u)  $ be the wave operator associated with the extremal Kerr metric $g$. 
The geometric picture of waves (e.g. light) near the horizon can be described in the form $Pu = 0$, where $P = \Delta \square_g$ is an operator with smooth coefficients.\footnote{This extends to vector-valued waves too (e.g. light as a solution to Maxwell equations) as it is often possible to decouple a system $P^\mu_\nu u_\mu = 0$ microlocally  into a set of independent scalar waves by conjugating with elliptic operators (see \cite{Taylor-Book}). }
Here the element $\Delta = (r-r_s/2)^2$ is to balance out the blowup of the metric so that the smooth framework can be applied across the horizon. 
In this rescaled form, the event horizon appears (see Section \ref{lemmas}) as a set of involutive double characteristic points instead of a blowup. 
One way interpret this is that the spacetime admits motion in two different levels: 
On one hand, we have the standard real principal type regimes outside and inside the black hole. In these regions wave fronts (wave packages, particles) evolve along the characteristic manifold (null covector bundle) $\Sigma =  \{ (q,\mo) \in T^* M \setminus \{0\} :  \mathcal{H} (q,p)   = 0 \} $ 
according to the canonical Hamiltonian flow 
\begin{equation}\label{flowa}
\begin{cases}
\dot{q} = \partial_\mo \mathcal{H},\\
\dot{\mo} = -\partial_q \mathcal{H}
\end{cases}
\end{equation}
 (see \cite{Hormander-FIO1,Hormander-FIO2}, \cite{Melrose-Uhlmann}) with the conserved quantity being mass, or strictly speaking, mass${}^{2}/2$ defined by $\mathcal{H} =  - \frac{1}{2}g^{\mu \nu }(q)  \mo_\mu \mo_\nu$.\footnote{The curves solving \eqref{flowa} in $\Sigma$ are considered as sets. The theory is conformally invariant: Multiplying $g$, and hence $\mathcal{H} $, with a positive function changes only the parametrisation of the curves.} 
 On the other hand,  $d \mathcal{H}  = 0$ holds for null covectors on the horizon which indicates absence of the flow \eqref{flowa} at such points. 
 Instead, the set of double characteristics, 
 \[
\{ (q,\mo) \in T^* M \setminus \{0\} :  \mathcal{H} (q,p)   = 0, \ d \mathcal{H} (q,p) = 0 \}  \subset \Sigma,
 \]
becomes non-trivial there. 
This set admits a natural foliation into leaves which are analogous to the curves \eqref{flowa} outside the horizon. 
These second order characteristics reflect dynamics that arise from higher order conservation laws beyond the standard framework of Noether's theorem. 
It was shown in \cite{Uhlmann-Thesis}, \cite{Conical-Refraction} by Melrose and Uhlmann that operators with double characteristics indeed admit additional propagation of singularities
along the double characteristic variety, provided that certain conditions are satisfied. In fact, this is the mathematical explanation behind the conical refraction, an optical phenomenon in which light is refracted 
into a cone of rays when hitting a biaxial crystal along the optical axis. 
In the context of extremal black hole, the propagation of double characteristics describe kinematics on the event horizon. 
Notice that each wave lies simultaneously on both sides of the horizon. The intersection of the wave front and event horizon orbits along the surface at a constant angular velocity,  
whereas the remaining parts outside and inside follow the standard Hamiltonian dynamics (null geodesics) described in \eqref{flowa}.



\subsection{The Model}
Let $(M,g)$ be the extremal 
(angular velocity $a=r_s/2$) 
Kerr black hole. 
The associated volume  form $dV_g$ is given by
\[
dV_g = \sqrt{-\det g} \  dq^0 \wedge \cdots \wedge dq^3  
\]
In Boyer-Lindquist coordinates $(t,r,\theta,\phi)$,
\[
dV_g = \Sigma  \sin \theta  \ dt \wedge dr\wedge d\theta \wedge d\phi  
\]
where 
\[
\Sigma := r^2 + a^2 \cos^2 \theta =  r^2 + \frac{r_s^2}{4}  \cos^2 \theta.
\]
Above $r_s$ stands for the Schwarzschild radius. It corresponds to the mass $M_{BH}$ of the black hole via $r_s = 2G M_{BH} /c^2$ where $G$ is the gravitational constant and $c$ is the speed of light. 
The ring singularity $\{ \Sigma= 0 \} = \{ \theta = \pi/2 , \ r = 0\}$ is where the curvature blows up and 
we shall exclude it from our definition of the spacetime $M$. 

In the local canonical coordinates generated by Boyer-Lindquist coordinates, the principal part 
$ - \sqrt{-\det{g}} \  g^{\mu \nu }(q) D_\mu D_\nu  $ 
 of $\square_g $ for the Kerr black hole has 
\[
 g^{\mu \nu }(q) \mo_\mu \mo_\nu =  -  \frac{1}{c^2 \Delta} \left( r^2 + a^2 + \frac{r_s r   }{\Sigma}  a^2 \sin^2 \theta \right) \mo_t^2 - \frac{2a}{ c \Delta}  \frac{r_s r   }{ \Sigma} \mo_\phi \mo_t  
+\frac{\Delta}{\Sigma} \mo_r^2 + \frac{1}{\Sigma} \mo_\theta^2  + \frac{1}{\Delta  \sin^2 \theta} \left( 1- \frac{r_s r   }{\Sigma} \right)  \mo_\phi^2,
\]
(this formula holds also for non-extreme $a\geq 0$)
where 
\[
\Delta := r^2 -  r_s r  + a^2  = (r-r_s/2)^2. 
\] 
An immediate consequence of symmetry is that both the energy $\mo_t $ and angular momentum $\mo_\phi$ are constants of motion.
The axis $\mathscr{A } \subset M$ of rotation can be identified as the boundary $\{ \theta = n \pi \}$ of the coordinate chart and the covectors $T^* \mathscr{A}$ as the set $\{  \theta = n \pi, \ \mo_\phi = 0\}$. 
Define the horizon $\mathscr{H} \subset  M$ as the points with $r=  r_s/2$. 
This is the set where the metric $g$ blows up. Notice that the two horizons of the Kerr black hole coincide in the extremal case.

Below $\Psi^l(M)$ stands for properly supported pseudo-differential operators of order $l$ on $M$. 
Let $\kappa \in C^\infty(M)$. 
We are interested in solving $u$ from the equation 
\begin{equation}\label{skiq}
\left(\square_g  + \frac{\kappa}{\Delta} \right) u  =\frac{ f}{\Delta}  
\end{equation}
 microlocally near a horizon point $(q,\mo)  \in \dot{T}^* M  $, $q \in \mathscr{H}$. 
 A special yet important example is the Klein-Gordon equation given by $f=0$, $\kappa=m^2 \Delta $ which describes spinless particles with mass $m$. Further, setting $m = 0$ gives the wave equation. 
The source term $\frac{ f}{\Delta}$ usually arises from interactions between lower order perturbations in the non-linear (interacting) theory. Here $f$ can be singular, although some assumptions are required for the fraction $\frac{ f}{\Delta} $ to be well defined.  
For example, it suffices to take $f$ to be singular across a submanifold that meets the horizon cleanly (see \cite[Theorem 1.3.6.]{Duistermaat-Book}). 
Since $\square_g + \frac{\kappa}{\Delta}$ blows up on the horizon, 
we rewrite the equation above in the conformally rescaled form
\begin{equation}\label{across-eq}
Pu =  f ,
\end{equation}
where 
\[
P := \Delta \square_g+ \kappa 
\]
and consider it on the whole $M$ instead of working separately on the two blocks $M^\pm = \{ 0< \pm (r-  r_s/2) \} $ of $M \setminus \mathscr{H}$.
This allows us extend the smooth PDE framework through the horizon.  
Indeed, every local representation of the operator $P$ has smooth coefficients 
and the solutions of the rescaled equation satisfy the original one in \eqref{skiq}.
Near $q \in M \setminus \mathscr{H}$, the operator $P$ is of real principal type. 
This case is well explored in \cite{Hormander-FIO1, Hormander-FIO2}, \cite{Melrose-Uhlmann}. 
\begin{remark}
Notice that, $P  \equiv \square_{\Delta^{-1} g}  \mod \Psi^1_{cl}(M)$. Indeed, we could rescale the metric $g$ instead. The lower order terms do not play a significant role in our analysis.
\end{remark}
\begin{remark}
Despite the blowup in the metric $g$, the volume form $dV_g$ is regular and nonvanishing. 
Hence, it can be used in the natural identification of smooth functions as $\frac{1}{2}$-densities. 
This identification is considered in the global description (See e.g. \cite[Chapter 11]{Grigis-Sjostrand}) of FIOs. 
\end{remark}

\subsection{The Results}\label{results}

Before stating the results, we briefly go through some key concepts related to them. 

\subsubsection{The Set $\Sigma_2$ of Double Charactertistics}
Let $\Sigma_2 \subset \dot{T}^* M$ be the submanifold  $\{ r=r_s/2, \ \mo_t +\Psi    = 0 \}$, $\Psi :=  \frac{g^{t\phi}}{g^{tt}} \mo_\phi  \in C^\infty(\dot{T}^* M )$ including the poles $ \{ r=r_s/2, \ \theta = m\pi,  \   \mo_\phi = 0, \ \mo_t    = 0 \    \}$ (NB. $\Psi|_{\mo_\phi= 0} = 0$).  
We shall show (Lemma \ref{proporo}, Lemma \ref{involemma}) that $\Sigma_2$ is an involutive double characteristic set (see the definition in Section \ref{lemmas}) of the operator $P$. 
Moreover, the subprincipal symbol $c_P$ of $P$  vanishes on $\Sigma_2$  (see Lemma \ref{sublemma}) and the Hessian of the principal symbol of $P$ is (see Lemma \ref{hesslemma}) of rank 2 on $\Sigma_2$ outside the conormal bundle 
$N^* \mathscr{H} = \{ r= r_s/2, \ \mo_\theta = \mo_\phi = \mo_t = 0\} $ of the horizon $\mathscr{H} = \{ r = r_s/2\}$. 


%

%
\subsubsection{Microlocal Reduction}
The microlocal Cauchy problem for $P$ in given local canonical coordinates $q^j, p_j $  (see Egorov's theorem \cite[Theorem 10.1.]{Grigis-Sjostrand}) of $ T^* \R^4$ is about solving $ P u  = 0 $ with $ (  u ,  D_1 u )|_{q^1= 0} = (f_1, f_2)$ up to a smooth residual.  
Using the techniques in \cite{Uhlmann-Thesis} (see also \cite{Conical-Refraction} for a more general case),
we reduce  (see Section \ref{mainsection})  the Cauchy problem for $P$ 
microlocally near given $\lambda \in \dot{T}^* M \setminus N^* \mathscr{H}$ to the following model form 
\begin{align}
\tilde{\mathcal{P}}  \tilde{\mathcal{E} } \equiv 0,  \quad 
 \tilde{\mathcal{E}} |_{x^0 = 0} \equiv \mathcal{I},  \label{caussimo}
\end{align}
where 
%
\begin{equation}
\tilde{\mathcal{P} } \equiv 
\begin{bmatrix} 
D_0-D_1 & 0 \\
0 & D_0
\end{bmatrix}
\mod  \mathcal{M}_{2\times 2} ( C^\infty( \R ;   \Psi^0 (\R^3)) ) , 
\end{equation}
 $\mathcal{I} :=  \begin{bmatrix} id & 0 \\ 0 & id  \end{bmatrix}$, and the underlying coordinates 
\begin{equation}\label{1Mia_coordinates}
(x,\xi) =  (\overbrace{x^0,x'}^{x}, \overbrace{\xi_0,\xi'}^\xi ) =
(x^0,\overbrace{x^1,x^2,x^3}^{x'},\xi_0,\overbrace{\xi_1,\xi_2,\xi_3}^{\xi'})  \in  U \subset \dot{T}^* \R^4 .
\end{equation}
of $\dot{T}^* M$ are obtained via local canonical transformation. 
Let $\mathcal{X} : \mathcal{D}'(M)  \to \mathcal{D}' (\R^4) $  
be an elliptic canonical transformation (see Egorov's theorem \cite[Theorem 10.1]{Grigis-Sjostrand}) to the coordinates \eqref{1Mia_coordinates} in the reduction above. We may assume that $\lambda$ is taken to an element in $ \dot{T}^*_0 \R^4$. Moreover, we identify $M$ as a subset of the Boyer-Lindquist coordinate chart with boundary, as before. 
The operator $\mathcal{X} $ is then a FIO in $I^0_{cl}(\Lambda_{\mathcal{X} })$, where the canonical relation $\Lambda_{\mathcal{X} }$ is the graph of the coordinate transformation. 
Let $\mathcal{Y} \in I^0_{cl}(\Lambda_{\mathcal{Y} }) $ be the microlocal inverse of $\mathcal{X} $. 
The Cauchy data $u \mapsto (u|_{x^0=0}, D_0 u|_{x^0 = 0})$ translates in $M$ to $v \mapsto  (\rho_1 v ,\rho_2 v)$, where  $\rho_1 := \iota^*  \mathcal{X}  $, $\rho_2   :=  \iota^*  D_0 \mathcal{X} $ and $\iota : \R^3 \hookrightarrow \R^4$, $\iota(x') \mapsto (0,x')$. 

\subsubsection{The Canonical Relations}
A canonical relation is by definition a Lagrangian submanifold in the twisted product $(X \times Y, \sigma_X \oplus -\sigma_Y) $ of two symplectic spaces $(X, \sigma_X)$ and   $(Y, \sigma_Y)$. 
We consider it as a bundle equipped with the cartesian projection $(x,y) \mapsto y$  to the right. 
Let $\Lambda_{D_0}$, $\Lambda_{D_1}$ and $\Lambda_{D_0 - D_1}$ be the characteristic flowout canonical relations in $U \times  U$ 
generated by $\xi_0$, $\xi_1$, and $\xi_0-\xi_1$, respectively.
That is; the pairs of points in $\{ \mathcal{H} = 0\}$ that are connected to each other via the flow \eqref{flowa}, where $\mathcal{H}$ is the generating function. 
Moreover, we define the canonical relations $C_{D_0}$, $C_{D_1}$ and $C_{D_0 - D_1}$ as the images of these manifolds in the projection $\text{pr}: (x,\xi, \tilde{x}, \tilde\xi ) \mapsto (x,\xi, \tilde{x}', \tilde\xi')$.  
%
%
The diagonal $\Lambda_0 = \{ (\lambda, \lambda) : \lambda \in U\}$ and $\text{pr} \Lambda_0$ divide the canonical relations $\Lambda_{(\cdot)}$, $C_{(\cdot)}$   transversally into forwards and backwards propagating halves 
$\Lambda_{(\cdot)}^\pm $, $C_{(\cdot)}^\pm$, respectively. 
We show in Section \ref{mainsection} that the fibres of the canonical relations $ \Lambda_{\mathcal{Y}} \circ \Lambda_{D_1} \circ C_{D_0}$ and $  \Lambda_{\mathcal{Y}} \circ \Lambda_{D_1} \circ C_{D_0-D_1}$ are parametrised by
\begin{equation}\label{cano2}
 (s_1,s_2) \mapsto   \left(t+ s_1,\frac{r_s}{2}, \theta , \phi +  \frac{c   }{r_s  } s_1   \ ; -\frac{c  }{ r_s } \mo_\phi , \mo_r (s_1,s_2) , \mo_\theta, \mo_\phi\right)  
\end{equation}
where $s_1,s_2$ are free real parameters and the $r$-momentum $\mo_r (s_1,s_2)= \mo_r (s_1,s_2, t, \theta,\phi, \mo_r, \mo_\theta,\mo_\phi )$ is some smooth function. 
Moreover, away from these relations, the manifolds $\Lambda_{\mathcal{Y}} \circ C_{D_0}$ and $ \Lambda_{\mathcal{Y}} \circ C_{D_0 - D_1}$ coincide with the characteristic flowout canonical relation $C_{\square_g} $ 
related to the wave operator $\square_g$ in the real principal type regions, i.e. away from the horizon. 
The analogous statement holds for $ \Lambda_{\mathcal{Y}} \circ \Lambda_{D_0}$ and $\Lambda_{\mathcal{Y}} \circ \Lambda_{D_0 - D_1}$ away from $\Lambda_{\mathcal{Y}} \circ \Lambda_{D_1} \circ ( \Lambda_{D_0}  \cup  \Lambda_{D_0 - D_1})$.

Below we indicate by $I^{-\infty}$ the class of operators with a smooth Schwartz kernel. 
The main theorem regarding the Cauchy problem is as follows. See Section \ref{mainsection} for a proof, including the details of the reduction above. 

\subsubsection{The Main Theorem}

\begin{theorem}[Parametrix for the Cauchy Problem]\label{mainpropositheo}\label{main1}
Let $\lambda \in \dot{T}^* M  \setminus   N^* \mathscr{H}$.  
There exists a continuous operator $E =E_\lambda : \mathcal{E}' (\R^3 ) \times \mathcal{E}' (\R^3 ) \to \mathcal{D}'(M) $ such that, microlocally near $\lambda$, 
\begin{align}
& P E  \equiv 0 \mod I^{-\infty} , \\
&(\rho_1E ,\rho_2E)   \equiv (id,id), 
\end{align}
and
\[
  E   \in I^{{\color{black}1/4,-1/2}} ( \Lambda_{\mathcal{Y} } \circ   C_{D_0} ,   \Lambda_{\mathcal{Y} } \circ \Lambda_{D_1} \circ C_{D_0} ) + I^{{\color{black}1/4,-1/2}}( \Lambda_{\mathcal{Y} } \circ C_{D_0- D_1} ,  \Lambda_{\mathcal{Y} } \circ \Lambda_{D_1} \circ C_{D_0-D_1} ) ,
\]
that is, 
 in the canonical coordinates \eqref{1Mia_coordinates}, 
\[
\mathcal{X}  E   \in I^{{\color{black}1/4,-1/2}} (   C_{D_0} ,    \Lambda_{D_1} \circ C_{D_0} ) + I^{{\color{black}1/4,-1/2}} ( C_{D_0- D_1} ,   \Lambda_{D_1} \circ C_{D_0-D_1} ) 
\]
where $\mathcal{X},\mathcal{Y}$ and $(\rho_1,\rho_2) = ( \iota^* \mathcal{X} ,   \iota^* D_0 \mathcal{X} )  $ are as described above. 
The canonical relations $ \Lambda_{\mathcal{Y} } \circ \Lambda_{D_1} \circ C_{D_0}$ and $  \Lambda_{\mathcal{Y} } \circ \Lambda_{D_1} \circ C_{D_0-D_1}$  above have fibres of the form \eqref{cano2} and
the operator $E$ is microlocally unique modulo $I^{-\infty}$. 
\end{theorem}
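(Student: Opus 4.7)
The plan is to exploit the decoupled model form \eqref{caussimo} to build the parametrix channel-by-channel and then conjugate back to $M$ via the elliptic FIO $\mathcal{Y}$. Granting the preparatory content of Section \ref{lemmas} (involutivity of $\Sigma_2$, the double characteristic property, the rank-$2$ Hessian off $N^* \mathscr{H}$, and the vanishing of the subprincipal symbol of $P$ on $\Sigma_2$), I invoke Uhlmann's normal form \cite{Uhlmann-Thesis} to produce the canonical transformation $\mathcal{X}$ together with an elliptic matrix-valued conjugator that reduces $P$ microlocally near $\lambda$ to the model $\tilde{\mathcal{P}}$. This converts the Cauchy problem for $P$ with data $(\rho_1 u, \rho_2 u)$ into two independent scalar Cauchy problems, one for $D_0 - D_1$ and one for $D_0$.

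For each scalar subproblem $Q u \equiv 0$, $u|_{x^0 = 0} \equiv g$ with $Q \in \{D_0,\ D_0 - D_1\}$, I would construct the propagator as a paired Lagrangian distribution on $\R^4 \times \R^3$. Its Schwartz kernel is singular along two cleanly intersecting Lagrangians: the initial one, whose associated canonical relation is $C_Q$, and the characteristic flowout $\Lambda_Q \circ C_Q$. The ansatz is the standard oscillatory-integral representation of the $\delta$-propagator along the flow of $Q$; the Cauchy condition fixes the principal symbol on the intersection, and the orders $(1/4, -1/2)$ are the universal orders of a propagator of real principal type in four spacetime dimensions. Cleanness holds because $\{x^0 = 0\}$ is noncharacteristic for both $D_0$ and $D_0 - D_1$ on the microsupport of interest (after the reduction, $\lambda$ is taken to a point where $\xi_1 \ne 0$ relative to the model coordinates).

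The main technical step is composing back with $\mathcal{Y}$ and verifying the geometric content of the resulting canonical relations. Ellipticity of $\mathcal{Y}$ together with the composition calculus for paired Lagrangians preserves the class $I^{1/4, -1/2}$, so the work is to check cleanness and to compute the composed fibres explicitly. Away from the horizon, $\Lambda_{\mathcal{Y}} \circ C_{D_0}$ and $\Lambda_{\mathcal{Y}} \circ C_{D_0 - D_1}$ must be identified with the standard characteristic flowout $C_{\square_g}$; this follows by pushing forward the null bicharacteristic flow of $D_0$ (resp.\ of $D_0 - D_1$) along the canonical transformation generating $\mathcal{Y}$. On the horizon, the additional leaves $\Lambda_{\mathcal{Y}} \circ \Lambda_{D_1} \circ C_Q$ encode the extra propagation along $\Sigma_2$; reading off the explicit fibre \eqref{cano2}, in particular the angular drift $\phi \mapsto \phi + (c/r_s) s_1$ and the energy relation $\mo_t = -(c/r_s)\mo_\phi$, amounts to expressing the Killing directions tangent to $\Sigma_2$ in the Boyer--Lindquist coordinates underlying $\mathcal{X}$. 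I expect this explicit geometric bookkeeping, and in particular the verification that cleanness survives near $\Sigma_2 \setminus N^* \mathscr{H}$, to be the main obstacle. Summing the two channels gives the stated class of $E$.

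Microlocal uniqueness modulo $I^{-\infty}$ is then immediate: if $E, E'$ are two such parametrices, then $\mathcal{X}(E - E')$ solves $\tilde{\mathcal{P}} v \equiv 0$ with vanishing Cauchy data, and standard real principal type propagation of singularities on each decoupled component forces $v \equiv 0 \mod I^{-\infty}$; pulling back by $\mathcal{Y}$ transfers the conclusion to $E - E'$.
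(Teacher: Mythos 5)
Your overall strategy---reduce to Uhlmann's model form via the lemmas of Section \ref{lemmas}, construct the parametrix in model coordinates, and conjugate back with $\mathcal{Y}$---matches the paper. But there is a genuine gap at the central step: you treat the reduced system \eqref{caussimo} as ``two independent scalar Cauchy problems'' for $D_0$ and $D_0-D_1$. The model operator $\tilde{\mathcal{P}}$ is diagonal only modulo $\mathcal{M}_{2\times2}(C^\infty(\R;\Psi^0(\R^3)))$, and the off-diagonal zeroth-order coupling cannot be removed near $\Sigma_2=\{\xi_0=\xi_1=0\}$, where the two principal symbols $\xi_0$ and $\xi_0-\xi_1$ coincide. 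If the channels really decoupled, each propagator would be an ordinary FIO in $I^{-1/4}(C_{D_0})$, resp.\ $I^{-1/4}(C_{D_0-D_1})$, and the second Lagrangians $\Lambda_{D_1}\circ C_{D_0}$, $\Lambda_{D_1}\circ C_{D_0-D_1}$---i.e.\ the propagation along the double characteristic variety, which is the whole point of the theorem---would never appear. In the paper these arise precisely from the coupling, through the integrated term \eqref{cicada3}, whose membership in $I^{1/4,-1/2}(C_{D_0},\Lambda_{D_1}\circ C_{D_0})+I^{1/4,-1/2}(C_{D_0-D_1},\Lambda_{D_1}\circ C_{D_0-D_1})$ is established by an explicit Fourier analysis of the boxcar factor $\int_{-x^0}^{x^0}e^{i\frac{r+x^0}{2}\zeta_1}dr$. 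Relatedly, your identification of the second Lagrangian of each channel as ``$\Lambda_Q\circ C_Q$'' is incorrect: $\Lambda_{D_0}\circ C_{D_0}$ is just $C_{D_0}$ again (the flowout relation acts trivially on its own initial relation), whereas the correct second Lagrangian is $\Lambda_{D_1}\circ C_Q$, built from the flowout of the \emph{other} factor $\xi_1$ coming from the factorization $P_0=\alpha P_0^+P_0^-$; it lives over $\Sigma_2$ and carries the fibres \eqref{cano2}. Your claim that $(1/4,-1/2)$ are ``the universal orders of a propagator of real principal type'' misattributes the paired orders to the wrong mechanism for the same reason: the genuinely paired contribution is the coupling term, while the uncoupled pieces \eqref{cicada1}--\eqref{cicada2} sit in $I^{-1/4}$ of a single Lagrangian and are merely embedded into the paired class.

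A second, smaller issue: the uniqueness argument ``propagation of singularities on each decoupled component'' inherits the same flaw; uniqueness must be argued for the coupled $2\times2$ first-order system, as in \cite{Uhlmann-Thesis}, not component-by-component. The geometric bookkeeping you defer---computing $\Lambda_{\mathcal{Y}}\circ\Lambda_{D_1}\circ C_{D_0}$ in Boyer--Lindquist coordinates---is indeed carried out in the paper by expressing $\xi_0$, $\xi_0-\xi_1$ and $\xi_1$ as combinations of $f^\pm P_0^\pm$ and integrating the Hamiltonian field of $(\mo_t+\Psi)+\alpha(r-r_s/2)\sqrt{\Phi}$ restricted to $\Sigma_2$; that part of your outline is consistent with the paper, but it cannot be completed until the coupled structure producing $\Lambda_{D_1}$ is in place.
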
 
\begin{remark}
It follows from the proof of the theorem above that 
$ \mathcal{X}E(f_1,f_2) = 
E^1 f_1 + 
E^2 f_2$ for
\[
E^j  \equiv E_1^j + E_2^j + E_3^j, \quad j=1,2,
\]
where 
$E_k^j$, $k=1,2,3$ have Schwartz kernels of the form
\begin{align}
E_1^j (x,y') & =  \int_{\R^3} e^{i(x'-y') \cdot  \zeta'} a^j(x,\zeta') d\zeta' \in I^{{\color{black}-1/4}}(C_{D_0}), \quad a^j \in S^0 (\R^4 \times \R^3 ) \label{cicada1}  \\ 
E_2^j  (x,y') & = \int_{\R^3} e^{i x^0 \zeta_1+i(x'-y')  \cdot  \zeta' } b^j(x,\zeta') d\zeta' \in I^{{\color{black}-1/4}}(C_{D_0-D_1}), \quad b^j \in S^0 (\R^4 \times \R^3 ) \label{cicada2} \\ 
E_3^j (x,y') & = \int_{-x^0}^{x^0} \int_{\R^3} e^{i\frac{r+x^0}{2}\zeta_1 + i(x'-y')\cdot \zeta'   } c^j(x,\zeta') d\zeta' dr,  \\
 &\in   I^{ {\color{black}1/4 ,-1/2} } ( C_{D_0} , \Lambda_{D_1} \circ C_{D_0} ) + I^{{\color{black}1/4,-1/2}} ( C_{D_0- D_1} , \Lambda_{D_1} \circ C_{D_0-D_1} ) , \quad c^j \in S^1 (\R^4 \times \R^3 ) \label{cicada3} , \quad j=1,2.
\end{align} 
{\color{black}Here we denote $\zeta' =( \zeta_1,\zeta_2,\zeta_3) \in \R^3$. }
\end{remark}
Applying  Duhamel's principle (see \cite{Uhlmann-Thesis} for details) yields the following:
\begin{corollary}[Approximate Inverse of $P$]
Let $\lambda \in \dot{T}^*M    \setminus N^* \mathscr{H} $. 
There exist continuous operators $G^\pm  : C^\infty_c (M)\to \mathcal{D}'(M) $\footnote{Forwards (+) / backwards (-) propagating.} such that, microlocally near $\lambda$, 
\[
P G^\pm  \equiv  I  \mod I^{-\infty} 
\]
and 
\[
 WF'( \tilde{G}^\pm) 
  \subset \Lambda_0 \cup \Lambda_1^\pm  \cup  \Lambda_2^\pm   , \quad \Lambda_0 := \{ ( \lambda , \lambda) : \lambda \in \dot{T}^* \R^4\}, \quad \Lambda_1 :=  \Lambda_{D_0}  \cup  \Lambda_{D_0-D_1},\quad \Lambda_2 := \Lambda_{D_1}^{-} \circ \Lambda_1 
\]  
for the conjugated operators $\tilde{G}^\pm :=  \mathcal{X} G^\pm \mathcal{Y}$. 
Moreover, the operators $G^\pm$ are  microlocally unique modulo $I^{-\infty}$. 
\end{corollary}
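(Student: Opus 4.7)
The plan is to derive $G^\pm$ from the Cauchy parametrix $E = E_\lambda$ of Theorem \ref{mainpropositheo} via Duhamel's principle, passing through the reduction to the canonical coordinates \eqref{1Mia_coordinates}, and then to read off the wave front set of $\tilde G^\pm := \mathcal{X} G^\pm \mathcal{Y}$ from the explicit oscillatory representations \eqref{cicada1}--\eqref{cicada3}. Setting $\tilde E := \mathcal{X} E$, I would define for $g \in C_c^\infty(\R^4)$
\[
\tilde G^\pm g(x) \;=\; \pm \int_0^{x^0} \bigl[ \tilde E\bigl(0,\; g(s, \cdot)\bigr) \bigr] (x^0 - s,\; x') \, ds ,
\]
in the standard Duhamel form for a second-order Cauchy problem. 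Applying $\tilde{\mathcal{P}}$ past the integral, the interior term $\int_0^{x^0} (\tilde{\mathcal{P}} \tilde E)(0, g(s, \cdot))(x^0 - s, \cdot)\, ds \equiv 0$ vanishes by Theorem \ref{mainpropositheo}, while the boundary contribution at $s = x^0$ equals $\rho_2 \tilde E(0, g(x^0, \cdot)) \equiv g(x)$ modulo smoothing. Hence $\tilde{\mathcal{P}} \tilde G^\pm \equiv I$, and setting $G^\pm := \mathcal{Y} \tilde G^\pm \mathcal{X}$ together with $\mathcal{Y} \mathcal{X} \equiv I$ and $\tilde{\mathcal{P}} \equiv \mathcal{X} P \mathcal{Y}$ yields $P G^\pm \equiv I$ microlocally near $\lambda$.

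For the canonical relation, the three pieces \eqref{cicada1}--\eqref{cicada3} contribute separately under the $s$-integration. The standard Lagrangian pieces $E_1^j \in I^{-1/4}(C_{D_0})$ and $E_2^j \in I^{-1/4}(C_{D_0 - D_1})$ generate the diagonal $\Lambda_0$ (from the endpoint $s = x^0$) together with the characteristic flowouts $\Lambda_{D_0}^\pm$ and $\Lambda_{D_0-D_1}^\pm$ (from the interior of the integration), assembling into $\Lambda_0 \cup \Lambda_1^\pm$. The paired Lagrangian piece $E_3^j$ contributes after the integration in $s$ the additional canonical relation $\Lambda_{D_1}^- \circ \Lambda_1 = \Lambda_2$. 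Restricting to the forward (resp.\ backward) half gives $WF'(\tilde G^\pm) \subset \Lambda_0 \cup \Lambda_1^\pm \cup \Lambda_2^\pm$.

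Microlocal uniqueness follows from propagation of singularities: if $G^\pm_1, G^\pm_2$ both satisfy the assertion, their difference $D$ has $PD \equiv 0$ with $WF'(D)$ in the stated set, and H\"ormander's propagation on the real principal stratum together with the Melrose--Uhlmann propagation along the double characteristic variety \cite{Uhlmann-Thesis}, \cite{Conical-Refraction}, applied in the direction selected by the sign, forces $D \equiv 0$ modulo $I^{-\infty}$.

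The principal technical obstacle is to show that the $s$-integration of $E_3^j$ yields precisely the backward half $\Lambda_{D_1}^-$ --- not both halves and not the forward one --- independently of whether $G^+$ or $G^-$ is being constructed. This will amount to a stationary-phase analysis of \eqref{cicada3} after the integration in $s$: one must establish the nondegeneracy of the combined phase jointly in the internal $r$-variable and $s$, and verify that the critical set selects a single oriented half of $\Lambda_{D_1}$, consistent with propagation along the event horizon in the underlying Kerr spacetime $M$.
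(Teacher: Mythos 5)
Your approach is exactly the paper's: the paper offers no proof of this corollary beyond the sentence ``Applying Duhamel's principle (see \cite{Uhlmann-Thesis} for details)'', so your reconstruction --- Duhamel applied to the Cauchy parametrix $E$ of Theorem \ref{main1}, followed by wave-front bookkeeping on the pieces \eqref{cicada1}--\eqref{cicada3} and a propagation-of-singularities uniqueness argument --- is the intended one. Two caveats. First, your formula $\tilde G^\pm g(x) = \pm\int_0^{x^0} [\tilde E(0,g(s,\cdot))](x^0-s,x')\,ds$ implicitly assumes the reduced operator is autonomous in $x^0$; since $\tilde{\mathcal{P}}$ carries a remainder in $\mathcal{M}_{2\times2}(C^\infty(\R;\Psi^0(\R^3)))$ depending on $x^0$, you must instead use the two-parameter Cauchy parametrix $E(x^0,s)$ with data posed at $x^0=s$ (the construction of Theorem \ref{main1} goes through verbatim with initial surface $\{x^0=s\}$), i.e.\ $\tilde G^\pm g(x)=\pm\int_0^{x^0}[E(x^0,s)(0,g(s,\cdot))](x')\,ds$. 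Second, the step you flag as the ``principal technical obstacle'' --- that the $s$-integration of the $E_3^j$ piece produces only the backward half $\Lambda_{D_1}^-$ of the double-characteristic flowout --- is precisely the content the paper delegates to \cite{Uhlmann-Thesis}; since you leave it as a stated obligation rather than carrying out the stationary-phase/orientation analysis, your argument is complete only modulo that reference, which is the same status as the paper's own treatment.
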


\subsection{Previous Works} 

 {\color{black}  Operators with multiple characteristics have been studied by several authors, including H\"ormander \cite{Hormadouble}, Uhlmann \cite{Uhlmann-Thesis}, \cite{Uhlmann-Multiple}, \cite{Uhlmann-Light}, Melrose and Uhlmann \cite{Conical-Refraction}, Ivrii \cite{Ivrii}, Boutet de Monvel \cite{Monvel}, \cite{BoutetdeMonvel1974}, Sj\"ostrand \cite{Sjostrand1976},  Petrini and Sordoni \cite{ojm/1200783427}, Lascar \cite{Lascar1981}, and Parenti \& Parmeggiani \cite{doi:10.1080/03605300902892360}. For operators with radial points, see the articles  \cite{radial},  \cite{radial1} by Haber, Vasy and Melrose. 
 
Our work resembles the microlocal and semiclassical studies involving normally hyperbolic trapping. 
See the articles \cite{2011AnHP...12.1349W}, \cite{articledy}, \cite{Dyatlov2013ResonancePA} on semiclassical estimates developed by Wunsch, Zworski and Dyatlov,  \cite{Vasy2010MicrolocalAO} on
microlocal analysis on Kerr-de Sitter spaces by Vasy, and the microlocal propagation theorems in \cite{hintz2021normally}, \cite{jia2022propagation}, \cite{jia2023applications} by Hintz and Jia. 
These theories are applicable when angular velocity of the black hole is sub-extremal ($a < r_s/2$). 
 As the two horizons merge in the extremal setting ($a = r_s/2$), the type of blowup in the metric changes. 
Due to this phenomenon, the principal symbol develops critical points at which the canonical symplectic flow in the characteristic variety ``freezes''.
Near these points, the conditions of normally hyperbolic trapping are no longer met.} 

The theory for operators of real principal type has been explored in \cite{Hormander-FIO1, Hormander-FIO2},  \cite{Melrose-Uhlmann}. 
For textbooks on microlocal analysis, see e.g. \cite{Grigis-Sjostrand}, \cite{Taylor-Book}, \cite{Duistermaat-Book}. 
The Kerr metric as an exact solution to the Einstein field equations in vacuum was originally discovered by Roy P. Kerr in \cite{PhysRevLett.11.237} in 1963. 
 {\color{black}Stability of Kerr–de Sitter family of black holes was shown over a half a century later by Hintz and Vasy in \cite{10.4310/ACTA.2018.v220.n1.a1}.  }
For an introduction to the Kerr spacetime, see e.g. \cite{o2014geometry}. 
Other important vacuum solutions are Schwarzchild metric \cite{schwarzschild1999gravitational} (uncharged, non-rotating), Reissner–Nordström metric \cite{https://doi.org/10.1002/andp.19163550905, https://doi.org/10.1002/andp.19173591804} (charged, non-rotating), and the Kerr-Newman \cite{10.1063/1.1704350, 10.1063/1.1704351} (charged, rotating). 

\subsection{Outline of the Article} 

In Section \ref{preli}, we introduce the notation and some of the basic concepts in microlocal analysis.
In Section \ref{lemmas}, we state and prove the lemmas required in 
Section \ref{mainsection} to carry out the microlocal reduction of the Cauchy problem which is then applied to derive the claim of Theorem \ref{main1}. 



\section{Preliminaries}\label{preli}

\subsection{Notation} 

Let $X$ be a smooth manifold. We denote by $TX$ and $T^*X$ the tangent and cotangent bundles of $X$, respectively. 
The fibers of these bundles at a given base point $q\in X$ are denoted by $T_qX$ and $T^*_qX$, respectively. 
We also denote by $\dot{T}X$ and $\dot{T}^*X$ the bundles of the punctured fibers 
$\dot{T}_q X:= T_q X \setminus \{0\}$, $q\in X$,  and $\dot{T}^*_q X:= T^*_q X \setminus \{0\}$, $q\in X$, respectively. 
Analogously, we write $\dot\R^n := \R^n \setminus \{0\}$.
Provided local coordinates $q^1,\dots,q^n$ of $X$,  the derivative $D_\alpha$, $\alpha \in \{ 1,\dots,n\}$ stands for  $-i\partial_\alpha$. Here $i$ is the imaginary unit. 
%

\subsection{Lagrangian and Involutive Submanifolds}

Let $X$ be a smooth manifold of dimension $n$ and denote by $\sigma$ the canonical symplectic 2-form $\sigma =d\mo_\mu \wedge  dq^\mu  $ on  $T^*X$. 
A submanifold $\Lambda \subset T^*X$ is called involutive if the symplectic normal 
\[
(T_\lambda \Lambda)^\sigma := \{ v \in T_\lambda T^*X :  \sigma(v,w) = 0, \ \forall w \in  T_\lambda \Lambda \} 
\]
 lies in  $ T_\lambda \Lambda$ for every $\lambda \in \Lambda$. If  $(T_\lambda \Lambda)^\sigma = T_\lambda \Lambda$, $\forall \lambda \in \Lambda$, then $\Lambda$ is said to be a Lagrangian submanifold. Indeed, the submanifold $\Lambda \subset T^*X$ is Lagrangian if and only if it is 
involutive and $\dim (\Lambda) = n$. An important example of a Lagrangian manifold is the conormal bundle $N^* V$ of a submanifold $V\subset X$. 
A submanifold $\Lambda \subset T^*X$ is involutive if and only if $\{ f_j, f_k\} |_{f_1 = \dots = f_k = 0}= 0$ 
for any set $f_1,\dots,f_k$ of local smooth functions with linearly independent $df_j$, $j=1,\dots,k$ defining $\Lambda$ locally as a level set 
$\{ f_1 = f_2= \cdots = f_k = 0 \}$. For a Lagrangian manifold, $k= n$. 

\begin{remark}
One may replace $T^*X$ above with any symplectic manifold. 
\end{remark}

{\color{black}
\begin{definition}
Let $U \subset X$ be an open neighbourhood. A smooth $\varphi  : U  \times \R^k\setminus \{0\} \to \R$ satisfying
\[
d\varphi(q,\xi ) \neq 0, 
\quad \text{and} \quad 
\varphi (q,\alpha \xi) = \alpha \varphi (q,\xi) ,  \quad \text{for all} \quad  (q,\xi) \in U  \times \R^k\setminus \{0\}, \quad \text{and} \quad \alpha \in \R \setminus \{0\},
\]
is called a local  phase function.  In this article, all phase functions are assumed to be homogeneous of degree 1 and real valued. 
Every local phase function generates a conic Lagrangian submanifold $\Lambda_\varphi \subset T^* U \setminus \{0\}$, defined by
\[
\Lambda_\varphi := \{  (q, d_\xi \varphi (q,\xi) )  :  (q,\xi) \in \ker (d_\xi \varphi )   \}.
\]
Given a conic Lagrangian manifold   $\Lambda \subset T^*X$  and $\lambda \in \Lambda$, we say that a local phase function $\varphi  : U  \times \R^k\setminus \{0\} \to \R$ defines $\Lambda$ 
near $\lambda$ if $\Lambda_\varphi = \Lambda \cap \Gamma$ holds in some conic neighbourhood $\Gamma \subset T^*U $ containing $\lambda$. We call $\varphi$ a local phase function defining $\Lambda$. 
\end{definition}
}
For $ \Lambda = N^* V$ , one may write locally $N^*V = \{ (q',q'',\mo',\mo'') : q' = 0, \ \mo''=0 \}$ which is defined by $\varphi(q,\mo) = q' \cdot \mo'$. 
%

\subsection{Symbol Spaces}

Let $U \subset X$ be open and set $1\leq  k \leq n$. The space $S^{\mu}  (U \times \R^k)$ of symbols  (of type $1,0$) of order $\mu \in \R \cup \{- \infty\}$ is defined as the collection of all $a \in C^\infty(U \times \R^k)$ such that
for all compact $K \subset U$ and multi-indices $\alpha \in \mathbb{N}^n$ and $\beta \in \mathbb{N}^k$ there is a constant $c = c_{K,\alpha,\beta,a} >0$ such that 
\[
| \partial_q^\alpha \partial_\mo^\beta a(q,\mo) | \leq c  (1+|\mo|)^{\mu - |\beta|} , \quad (q,\mo) \in K \times \R^k.
\]
Every symbol $a\in S^{\mu}  (U \times \R^k)$ admits an asymptotic development $a \sim  \sum_{j=0}^\infty  a_j$ such that $a_j \in S^{\mu-j}  (U \times \R^k)$. 
Conversely, every such a sum has a representative $a\in S^{\mu}  (U \times \R^k)$ which is unique modulo $S^{-\infty}  (U \times \R^k)$.  
The space  $S^{\mu}_{cl}  (U \times \R^k) $ of classical symbols is defined as those elements of $ S^{\mu}  (U \times \R^k)$ that admit asymptotic development $a \sim  \sum_{j=0}^\infty  a_j $ into symbols $a_j \in S^{\mu-j}  (U \times \R^k)$ that are positively homogeneous in $\mo$ outside some neighbourhood of origin. 
We refer to \cite[Ch. 1]{Grigis-Sjostrand} for more details on symbol spaces. 

In addition to the standard symbol spaces, there is also the space $S^{\mu_1,\mu_2} (U \times \dot\R^{k_1} \times \R^{k_2})$  of elements $a\in C^\infty(U \times \dot\R^{k_1} \times \R^{k_2})$ 
 such that
for all compact $K \subset U$ and multi-indices $\alpha \in \mathbb{N}^n$ and $\beta_j \in \mathbb{N}^{k_j}$, $j=1,2$ there is a constant $c = c_{K,\alpha,\beta_1,\beta_2,a} >0$ such that 
\[
| \partial_q^\alpha \partial_{\xi'}^{\beta_1}\partial_{\xi''}^{\beta_2}  a(q,\xi',\xi'') | \leq c  (1+|\xi'|+ |\xi''|)^{\mu_1 - |\beta_1|}  (1+|\xi''|)^{\mu_2 - |\beta_2|}, \quad (q,\xi',\xi'') \in K \times \R^{k_1}\times \R^{k_2}.
\]
See 
\cite{Greenleaf-Uhlmann} for more details. 

\subsection{The Wave Front Set}
For $u \in \mathcal{D}'(\R^n)$ we define the wave front set $WF(u) \subset \dot{T}^*\R^n$ as {\color{black}the} complement of all the covectors $(q,\mo) \in \dot{T}^*\R^n$ for which 
there exists a smooth compactly supported test function $\phi \in C_c^\infty(\R^n)$, non-zero at $q$, and a conic neighbourhood $\Gamma \subset \dot\R^n = \dot{T}_q^* \R^n$ of $\mo$ such that the Fourier transform $\widehat{\phi u}(\mo)$ on $\Gamma$ decays rapidly, {\color{black} i.e.
\[
| \widehat{\phi u}(\mo) | \leq C_{N} (1+ |\mo| )^{-N}, \quad \mo \in \Gamma, 
\]
for arbitrarily large $N \in \N$. Here $C_N $ is a strictly positive constant that may depend on $N$.}
The definition generalises to manifolds via local coordinates. The wave front set $WF(u)$ describes where $u$ fails to be smooth. 
It captures not only the singular points $q$ (singular support) but also the directions $p$ in which the peaks are visible.  
For $u \in C^\infty$ we have $WF(u) = \emptyset$. Such elements are considered residual in the microlocal framework. 
%


\subsection{Lagrangian Distributions}

A Lagrangian distribution of order $m$ over a conic Lagrangian submanifold $\Lambda \subset \dot{T}^*X$  is defined as an distribution $u \in  \mathcal{D}'(X)$ 
that can be expressed as a locally finite sum  $u= \sum_\alpha u_\alpha$ of oscillatory  integrals of the form 
\begin{equation}\label{oscu}
u_\alpha(q) = \int_{\mathbb{R}^k} e^{i\varphi(q,\zeta) } a  (q,\zeta) d\zeta_1,\dots d\zeta_k  
\end{equation}
where $\varphi = \varphi_\alpha$ is a smooth homogeneous phase function defining $\Lambda$  and $a=a_\alpha \in S^{m-\frac{k}{2} + \frac{n}{4}}( X \times \dot\R^k)$. 
It follows that $u$ is smooth microlocally away from $\Lambda$. Indeed, $WF(u)\subset \Lambda$ for the wave front set of $u$ and it is the order of the symbol $a$ in the oscillatory representation above that describes how strong the singularity is microlocally near given point of $\Lambda$. 
The order $-\infty$ corresponds to an absence of singularity.  
We denote by $I^m (\Lambda)$ the space of Lagrangian distributions over $\Lambda$. Similarly, we denote by $I^m_{cl} (\Lambda)$ the space of Lagrangian distributions over $\Lambda$ with classical symbols. 
An element of $I^m(N^*V)$, where $V$ is a submanifold of $X$ is referred to as a conormal distribution, or more precisely, a distribution conormal to $V$.

\subsection{Operator Classes}

Fourier integral operator (FIO) is by definition an operator that admits a Lagrangian distribution as the Schwartz kernel. In this article, it is also required that the operator is properly supported. 
A Fourier integral operator associated with the Lagrangian $\Lambda= N^*\{ (q,\tilde{q}) \in X \times X  : q= \tilde{q} \} =  \{ (q,\mo ; \tilde{q},\tilde\mo) \in \dot{T}^*X \times \dot{T}^*X : q= \tilde{q} , \ \tilde\mo = - \mo \}$ is called a pseudo-differential operator ($\Psi$DO). The space of (properly supported) pseudo-differential operators on $X$ of degree $m$ is denoted by $\Psi^m(X)$. 

For a FIO, the leading term on $\Lambda$ in the asymptotic sum $a \sim \sum_{\alpha=0}^\infty a_\alpha$ of the underlying symbol is referred to as the (local) principal symbol of the operator. 
Globally it is identified as a section in the Maslov line bundle $\Omega_{1/2 } \otimes \mathcal{L}$ (See e.g. \cite[Ch. 11]{Grigis-Sjostrand}, \cite[Ch. 4]{Duistermaat-Book}). The principal symbol fixes an unique equivalence class in $I^m (\Lambda) / I^{m-1} (\Lambda)$ and vice versa. An elliptic Fourier integral operator is an operator for which the principal symbol is non-vanishing on the whole canonical relation. This is said to hold microlocally near a point if the condition is satisfied in a conic neighbourhood of it. 

For two FIOs $F : C_c^\infty(Y) \to \mathcal{D}' (X)$ and $G : C_c^\infty(Z) \to \mathcal{D}' (Y)$ with kernels $k_F \in I^{m_F}(\Lambda_F)$ and $k_G \in I^{m_G}(\Lambda_G)$, we have for the composition $F \circ G$ (if well defined) the rule 
\begin{equation}\label{compoeq}
{\color{black}k_{F \circ G} \in I^{m_F+ m_G {\color{black}+ e/2}} ( \Lambda_{ F \circ G }),}
\end{equation}
{\color{black}where $\Lambda_{F \circ G}' =  \Lambda_F' \circ \Lambda_G' := \{ (\lambda_1,\lambda_3)  :  \exists \lambda_2 ; \ (\lambda_1,\lambda_2) \in \Lambda_F' , \ ( \lambda_2, \lambda_3) \in \Lambda_G' \}$ }
and the prime refers to the associated canonical relation\footnote{Lagrangian manifold with respect to the twisted symplectic form} $\Lambda' := \{(\lambda_1,\lambda_2) :  (\lambda_1,-\lambda_2) \in \Lambda \}$. 
{\color{black}Above $e$ stands for the excess in the intersection  $(\Lambda_F' \times  \Lambda_G')  \cap (T^*X \times \Delta \times T^* Z)$ which is required to be clean. Here $\Delta$ stands for the diagonal $\{ (\lambda,\tilde{\lambda})  : \lambda = \tilde{\lambda} \}$ in $T^*Y \times T^*Y$. For a transversal intersection, we have $e=0$.   }
Sufficient conditions for the compositions to be well defined and the associated symbol calculus (transversal intersection calculus) is explored in \cite{Hormander-FIO1} \cite{Duistermaat-Book}, \cite{Grigis-Sjostrand}. 
The formula above applies also to the case where $F$ is a Lagrangian distribution. This is obtained by setting $Z$ to be a single point space. 
Ellipticity is preserved in compositions and each elliptic operator admits an elliptic inverse modulo a residual with  smooth kernel.  Moreover, an elliptic FIO preserves  the wave front set  in the sense that $WF(F u) = \Lambda_F' \circ WF(u)$, provided that the composition is well defined.  Thus, ellipticity can be considered as an isomorphic relation in microlocal analysis (cf. diffeomorphisms in the category of smooth manifolds or bijections in topology). 
It is natural to associate the operators with the canonical relations instead of the Lagrangian manifolds and we adopt the same notation 
$ I^m (\Lambda)$ for operators if $\Lambda$ is a canonical relation, meaning that the Schwartz kernel is a Lagrangian distribution in $I^m (\Lambda')$.

{\color{black}
Associated to a pair $\Lambda_0, \Lambda_1 \subset \dot{T}^*X$ of cleanly intersecting pair of Lagrangian manifolds, there is the space $I^{m_1,m_2} ( \Lambda_0, \Lambda_1) \subset \mathcal{D}'(X)$, first introduced by Melrose and Uhlmann in \cite{Melrose-Uhlmann}. We follow the formalism in \cite[Section 1]{Greenleaf-Uhlmann}. 
Let us consider the case where $\Lambda_0, \Lambda_1$ are locally of the form $N^*S_2$ and $N^*S_1$, where $S_2 \subset S_1 \subset X$ are nested manifolds of codimension $k_1$ and $k_1 + k_2$. 
We can choose local coordinates $z = (z',z'',z''') \in \R^{k_1} \times \R^{k_2} \times  \R^{n-k_1-k_2}$ such that $S_1 = \{ z' = 0 \}$ and $S_2 = \{ z' = 0, z'' = 0 \}$. 
The elements of $I^{m_1,m_2} ( \Lambda_0, \Lambda_1)$ can be microlocally reduced into a finite sum of oscillatory integrals 
\[
\int_{\R^{k_1}} \int_{\R^{k_2}} e^{z' \zeta' + z'' \cdot \zeta' } a(z',z'',\zeta',\zeta'') d\zeta' d\zeta'', \quad a \in S^{\mu_1,\mu_2} ( \R^n \times \dot\R^{k_1} \times \R^{k_2} )
\]
where $\mu_1 = m_1-k_1/2 + n/4$ and $\mu_2 = m_2 - k_2 /2$. 
%
Microlocally away from $   \Lambda_1 $ (resp. $ \Lambda_0 $) the elements in $I^{m_1,m_2} ( \Lambda_0, \Lambda_1)$ lie in $I^{m_1+ m_2}( \Lambda_0) $ (resp. $I^{m_1 }( \Lambda_1) $).  }
These distributions define operator classes together with a transversal composition calculus similar to the one developed for FIOs. 

 A (microlocal) inverse of a smooth Lorentzian wave operator $\square_g$ has a kernel in the operator class $I^{{\color{black}-3/2,-1/2}}(\Delta, \Lambda_\square )$, where $\Delta$ stands for the diagonal $\Delta= \{ (\lambda,\tilde{\lambda})  : \lambda = \tilde{\lambda} \}$ and 
 $\Lambda_\square
 $ is the flowout canonical relation which consists of the pairs that are connected to each other via the geodesic flow (see \eqref{flowa} for $\mathcal{H} = -\frac{1}{2} g^{\mu \nu} \xi_\mu \xi_\nu$ ) along the lightcones.\footnote{Notice that $\Delta $ and $ \Lambda_\square$ intersect cleanly. That is; $\Delta \cap  \Lambda_\square$ is a smooth submanifold and $T_\lambda(  \Delta \cap  \Lambda_\square) = T_\lambda \Delta \cap T_\lambda  \Lambda_\square$ holds at every intersection point $\lambda \in \Delta \cap  \Lambda_\square$.}
  The result extends (see \cite{Hormander-FIO1},\cite{Melrose-Uhlmann}) to real principal type operators. 
 The inverse, also known as parametrix, has its operator wave front set\footnote{The operator wave front set stands for the twisted $WF'(k):= (WF(k))'$ where $k$ is the kernel of the operator. } in $\Delta \cup \Lambda_\square$.  

\section{Lemmas}\label{lemmas}
A covector $\lambda$ is called a double characteristic point of a pseudo-differential operator $R$ if it satisfies both $r(\lambda) = 0 $ and $d r(\lambda) = 0$ where $r$ stands for the principal symbol of $R$ near $\lambda$. 
\begin{lemma}\label{proporo}
The set $\Sigma_2$ equals the double characteristic points of $P$. 
\end{lemma}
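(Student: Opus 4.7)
The principal symbol of $P = \Delta\square_g + \kappa$ equals, up to a nowhere-vanishing elliptic factor, the function
\[
\sigma_P(q,\mo) = -\Delta(r)\, g^{\mu\nu}(q)\, \mo_\mu \mo_\nu,
\]
which is smooth on $T^*M$ because $\Delta = (r-r_s/2)^2$ absorbs the $1/\Delta$ singularities in the inverse metric. The plan is to characterise $\{\sigma_P = 0 = d\sigma_P\}$ by first excluding double characteristics off the horizon $\mathscr{H} = \{r = r_s/2\}$, and then showing the restriction $\sigma_P|_{\mathscr{H}}$ is a perfect square of the linear form cutting out $\Sigma_2$.

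Off $\mathscr{H}$, $\Delta > 0$ and the symbol $h := -g^{\mu\nu}\mo_\mu\mo_\nu$ satisfies $dh \neq 0$ on $\{h = 0\} \cap \dot{T}^*M$ by non-degeneracy of the Lorentzian fibre form. From $d\sigma_P = h\,d\Delta + \Delta\,dh$ one then obtains $d\sigma_P = \Delta\,dh \neq 0$ on $\{\sigma_P = 0\}$, so no double characteristics exist outside $\mathscr{H}$. On $\mathscr{H}$, I substitute the extremal values $r = a = r_s/2$ into the explicit formula for $g^{\mu\nu}\mo_\mu \mo_\nu$: all terms involving $\Delta$ or $\Delta^2$ drop out, and the identities
\[
r^2+a^2 = 2a^2, \qquad \frac{r_s r}{\Sigma} = \frac{2}{1+\cos^2\theta}, \qquad 1 - \frac{r_s r}{\Sigma} = -\frac{\sin^2\theta}{1+\cos^2\theta}
\]
reduce the remaining quadratic form in $(\mo_t,\mo_\phi)$ to
\[
\sigma_P|_{\mathscr{H}} = \frac{(r_s/c)^2}{1+\cos^2\theta}\left(\mo_t + \tfrac{c}{r_s}\mo_\phi\right)^2 = \alpha(\theta)\big(\mo_t + \Psi|_{\mathscr{H}}\big)^2,
\]
where the analogous simplification of $g^{t\phi}/g^{tt}$ at the horizon yields $\Psi|_{\mathscr{H}} = (c/r_s)\mo_\phi$. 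Hence the horizon zeros of $\sigma_P$ are precisely $\Sigma_2$, the poles being handled by continuity in agreement with the paper's convention.

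It remains to verify $d\sigma_P = 0$ on $\Sigma_2$. Tangential and $\mo$-derivatives of $\sigma_P|_{\mathscr{H}}$ vanish automatically, since the restriction is the square of a linear function whose zero set is $\Sigma_2$. For the normal direction, I decompose $\sigma_P = A(r,\theta,\mo_t,\mo_\phi) - (\Delta^2/\Sigma)\mo_r^2 - (\Delta/\Sigma)\mo_\theta^2$; both $\Delta$-containing terms contribute nothing to $\partial_r$ at $r = r_s/2$ because $\Delta$ and $\partial_r\Delta = 2(r-r_s/2)$ simultaneously vanish there, reducing the claim to $\partial_r A|_{\Sigma_2} = 0$. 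Using $\partial_r(r_s r/\Sigma)|_{r=a} = -r_s\sin^2\theta/(a^2(1+\cos^2\theta)^2)$, the pieces of $\partial_r A|_{r=r_s/2}$ regroup into a scalar multiple of the perfect square $\big(\tfrac{a\sin\theta}{c}\mo_t + \tfrac{1}{\sin\theta}\mo_\phi\big)^2$ together with a bare $\mo_t^2$ contribution; on $\Sigma_2$ the extremality $a = r_s/2$ forces the two to cancel identically. This final cancellation — the step promoting the tangential perfect-square structure to second-order vanishing also in the $r$-direction — is the main obstacle of the proof, and is exactly where the coalescence of the inner and outer Kerr horizons at extremality enters.
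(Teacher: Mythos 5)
Your proof is correct --- I checked the final cancellation: with $\beta:=\partial_r(r_sr/\Sigma)|_{r=a}=-2\sin^2\theta/(a(1+\cos^2\theta)^2)$ one gets $\partial_r A|_{r=a}=\tfrac{2a}{c^2}\mo_t^2+\beta\bigl(\tfrac{a\sin\theta}{c}\mo_t+\tfrac{1}{\sin\theta}\mo_\phi\bigr)^2$, and on $\Sigma_2$ (where $\mo_\phi=-\tfrac{2a}{c}\mo_t$) the square evaluates to $\tfrac{a^2}{c^2}\mo_t^2(1+\cos^2\theta)^2/\sin^2\theta$, so the two terms cancel exactly --- but your organization differs from the paper's. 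The paper completes the square \emph{globally} (off the axis), writing the symbol as $\alpha\bigl[(\mo_t+\Psi)^2-\Delta\Phi\bigr]$ with $\Psi,\Phi$ smooth across the horizon; the differential computation at $r=r_s/2$ then collapses to the one line $d\bigl[(\mo_t+\Psi)^2-\Delta\Phi\bigr]=2(\mo_t+\Psi)\,d(\mo_t+\Psi)$, since $\Delta$ and $d\Delta$ both vanish there, and all directions (tangential and normal) are handled at once. You instead restrict to the horizon first, exhibit the perfect square only on the slice, and must then verify the transverse $\partial_r$-derivative by an explicit regrouping --- this is where your extra computation of $\partial_r(r_sr/\Sigma)$ and the $\mo_t^2$ cancellation lives, work the paper's factorization makes invisible. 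Both arguments rest on the same two facts (the symbol is a square of the defining function of $\Sigma_2$ modulo $\Delta\cdot(\cdots)$, and $\Delta$ vanishes to second order at extremality), and both defer the coordinate degeneracy at the axis to a continuity/closure argument, which you should state slightly more carefully (the paper notes that both $\Sigma_2$ and the double characteristic set are closed, so each contains the closure of the other minus the axis). The paper's version buys economy and reusability ($\Psi$, $\Phi$ and the factorization $P_0=\alpha P_0^+P_0^-$ are needed again in Lemmas 3.2--3.4 and in the parametrix construction); yours buys a self-contained, purely computational verification that makes the role of extremality in the normal direction explicit.
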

\begin{proof}
Let us first study the claim in a neighbourhood that does not meet the covectors over the axis $\mathscr{A}$. 
In Boyer-Lindquist coordinates, $P \equiv P_0(q,D) \mod \Psi^1(\R^4) $, where 
\begin{equation}\label{adorwe}
P_0 (q,\mo) = - \sqrt{-\det g} \Delta g^{\mu \nu } \mo_\mu \mo_{\nu}  =   -(  \sqrt{-\det g} \  \Delta g^{tt})  \left(  ( \mo_t + \Psi )^2 -  \Delta \Phi \right) , 
\end{equation}
\begin{equation}\label{psieq}
\Psi =  \frac{g^{t \phi} }{g^{tt}}  \mo_\phi =  \frac{ac  }{  ( r^2 + a^2 +   \frac{r_s r   }{\Sigma}  a^2  \sin^2 \theta ) }    \frac{r_s r   }{\Sigma}  \mo_\phi  ,
\end{equation}
and
\begin{equation}\label{phieq}
\Phi = \frac{1}{\Delta} \left( \Psi^2 -   \frac{1}{g^{tt}}( g^{rr} \mo_r^2 + g^{\theta \theta } \mo_\theta^2 + g^{\phi \phi} \mo_\phi^2 ) \right) =   \frac{ c^2 }{  (r^2 + a^2 + \frac{r_s r   }{\Sigma}  a^2 \sin^2 \theta )  } \left( \frac{  \Delta }{\Sigma} \mo_r^2 + \frac{  1 }{\Sigma} \mo_\theta^2  + \frac{1 }{\Sigma \sin^2 \theta}   \mo_\phi^2 \right) .
\end{equation}
Recall that $g^{tt}$ is strictly negative with a type $O(\frac{1}{\Delta })$ blowup and the local volume element $\sqrt{-\det g} \  dq$ does not vanish outside the ring singularity $\{ \theta = \pi/2, \ r= 0\}$ which we have excluded from $M$. 
Indeed, the coefficient $ -(  \sqrt{-\det g} \  \Delta g^{tt}) $ is smooth and non-vanishing on the coordinate chart so it suffices to prove the claim for $   ( \mo_t + \Psi )^2 -   \Delta \Phi $ instead of $P_0$.
We also recall that $\square_g$ and hence also $P$ is of real principal type away from the horizon so we may focus on points with $r=r_s/2$. 
Characteristic condition $ ( \mo_t + \Psi )^2 -   \Delta \Phi =0$ at such points is equivalent to $\mo_t + \Psi = 0$. Moreover,
\[
 d\left(  ( \mo_t + \Psi )^2 -   \Delta \Phi  \right)  |_{r=r_s/2}=( 2 ( \mo_t + \Psi ) d( \mo_t + \Psi )  - 2 (r-r_s/2)  \Phi dr   - \Delta d\Phi ) |_{r=r_s/2}  = 2 ( \mo_t + \Psi ) d( \mo_t + \Psi ) |_{r=r_s/2} .
\]
As $\partial_{\mo_t}( \mo_t + \Psi )=1 \neq 0 $, this derivative vanishes if and only if $  \mo_t + \Psi  = 0$. In conclusion, the double characteristics in Boyer-Lindquist coordinates coincide with $\Sigma_2$. 
Let us now deduce the claim near $ \dot{T}^*_{\mathscr{A} } M:= \{ (q,\mo) \in \dot{T}^* M : q \in \mathscr{A} \}$. First of all, the double characteristic set as a global entity is topologically closed in $\dot{T}^*M$ as a preimage of the closed set $\{0\}$ in a continuous map. 
Combining this with the first part of the proof implies that the topological closure of $\Sigma_2 \setminus \dot{T}^*_{\mathscr{A} } M$ in $\dot{T}^*M$ lies in the double characteristic set. 
One checks that this closure is the whole $\Sigma_2$. Conversely, the set $\Sigma_2$ is closed in $\dot{T}^*M$ too so a similar argument implies that the double characteristics lie in $\Sigma_2$. 
This finishes the proof. 

\end{proof}

\begin{lemma}\label{involemma}
The set $\Sigma_2  \subset \dot{T}^* M$ is an involutive submanifold of codimension 2. 
\begin{proof}
Recall \eqref{adorwe} from Lemma \ref{proporo}. 
By definition, $\Sigma_2$ is the level set of the smooth functions $f_1 = r-r_s/2$ and $f_2 = \mo_t + \Psi$ on $\dot{T}^* M$, both extending smoothly onto $ \dot{T}^*_{\mathscr{A} } M$. 
The derivatives $df_1 = dr$ and $df_2 = d\mo_t + d_{r,\theta,\mo_\phi }\Psi$ are clearly non-vanishing and linearly independent. 
Moreover, the Poisson bracket $\{ f_1 , f_2\} $ is identically zero. Hence, the claim is a consequence of \cite[Lemma 3.6.1]{Duistermaat-Book}. 

\end{proof}
\end{lemma}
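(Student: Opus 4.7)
The plan is to realize $\Sigma_2$ globally as the clean zero set of two smooth, functionally independent functions whose Poisson bracket vanishes identically, and then to invoke the standard equivalence (recalled in Section \ref{preli}) between involutivity and the pairwise vanishing of Poisson brackets of a complete set of defining functions.

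First I would take the candidates $f_1 := r - r_s/2$ and $f_2 := \mo_t + \Psi$ on $\dot{T}^* M$, so that $\Sigma_2 = \{f_1 = 0, \ f_2 = 0\}$ by the definition given in Section \ref{results}. The one delicate point is the smooth extension of $f_2$ across the covectors $\dot{T}^*_{\mathscr{A}}M$ lying over the axis of rotation, where the Boyer-Lindquist chart degenerates. Using the explicit expression \eqref{psieq} for $\Psi$, the denominator $r^2 + a^2 + (r_s r/\Sigma)a^2 \sin^2\theta$ is strictly positive on the horizon, while the identity $\Psi|_{\mo_\phi = 0} = 0$ matches the prescribed value at the poles where $\phi$ is ill-defined. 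Hence both $f_1$ and $f_2$ are smooth on $\dot{T}^* M$.

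Next I would verify the submanifold structure: $df_1 = dr$ and $df_2 = d\mo_t + d_{r,\theta,\mo_\phi}\Psi$ are linearly independent at every point, since the $d\mo_t$ component appears in $df_2$ but is absent from $df_1$. Hence $\Sigma_2$ is a closed embedded submanifold of codimension $2$.

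For involutivity I would compute the Poisson bracket $\{f_1, f_2\}$ in the canonical Boyer-Lindquist coordinates. Because $f_1$ depends only on the configuration variable $r$, the bracket collapses to $\partial_{\mo_r}(\mo_t + \Psi)$, which vanishes identically because $\Psi$ is independent of $\mo_r$ (it involves only $r$, $\theta$ and $\mo_\phi$). Thus $\{f_1, f_2\} \equiv 0$ on all of $\dot{T}^* M$, not merely on $\Sigma_2$, and the standard criterion (e.g.\ \cite[Lemma 3.6.1]{Duistermaat-Book}) then yields involutivity. The only step I would anticipate as a genuine obstacle is the smooth extension onto $\dot{T}^*_{\mathscr{A}}M$; it is resolved by the explicit structure of $\Psi$ noted above, so no further analysis is required.
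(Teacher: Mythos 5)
Your proposal is correct and follows essentially the same route as the paper: the same defining functions $f_1 = r - r_s/2$ and $f_2 = \mo_t + \Psi$, the same check of linear independence of the differentials, the vanishing of $\{f_1, f_2\}$, and the appeal to \cite[Lemma 3.6.1]{Duistermaat-Book}. You merely supply a bit more detail than the paper on why the bracket vanishes and why $\Psi$ extends smoothly over the axis, both of which are consistent with the paper's (terser) argument.
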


\begin{lemma}\label{hesslemma}
The Hessian of the principal symbol of $P$ has rank $2$ on $\Sigma_2 \setminus N^* \mathscr{H}$ 
\end{lemma}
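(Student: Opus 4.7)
The plan is to exploit the factorisation $P_0 = A\,h$ visible in \eqref{adorwe}, where $A := -\sqrt{-\det g}\,\Delta g^{tt}$ is smooth and nowhere vanishing on the Boyer-Lindquist chart and $h := (\mo_t+\Psi)^2 - \Delta\Phi$. The function $h$ vanishes to second order on $\Sigma_2$ --- this is exactly what the computation in Lemma \ref{proporo} shows, once one notes that the prefactor $A$ was discarded there as non-vanishing. Hence the Leibniz rule gives
\[
\operatorname{Hess}_\lambda(P_0) = A(\lambda)\,\operatorname{Hess}_\lambda(h), \quad \lambda \in \Sigma_2,
\]
so the rank of $\operatorname{Hess}(P_0)$ coincides with that of $\operatorname{Hess}(h)$ on $\Sigma_2$, and the specific smooth nonvanishing prefactor in \eqref{adorwe} does not matter.

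Next I would compute $\operatorname{Hess}_\lambda(h)$ directly from the defining vanishings $\mo_t+\Psi = 0$, $\Delta = 0$, and $d\Delta = 2(r-r_s/2)\,dr = 0$ on $\Sigma_2$. Expanding $h$ by Leibniz and discarding every term killed by one of these vanishings leaves, as a symmetric tensor,
\[
\operatorname{Hess}_\lambda(h) = 2\,d(\mo_t+\Psi)\otimes d(\mo_t+\Psi) - 2\Phi(\lambda)\,dr\otimes dr.
\]
The covector $d(\mo_t+\Psi) = d\mo_t + d\Psi$ contains $d\mo_t$ with coefficient one, while $dr$ does not, so the two are linearly independent. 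The displayed bilinear form therefore has rank exactly $2$ when $\Phi(\lambda) \neq 0$ and rank $1$ otherwise.

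It remains to check that $\Phi > 0$ on $\Sigma_2 \setminus N^*\mathscr{H}$. Evaluating \eqref{phieq} at $r = r_s/2$ kills the $\mo_r^2$ contribution and leaves $\Phi$ equal to a strictly positive factor times $\mo_\theta^2 + \mo_\phi^2/\sin^2\theta$. This vanishes only if $\mo_\theta = \mo_\phi = 0$; but on $\Sigma_2$ the identity $\Psi|_{\mo_\phi = 0} = 0$ from \eqref{psieq} then forces $\mo_t = -\Psi = 0$, which together with $r = r_s/2$ places $\lambda$ in $N^*\mathscr{H}$. Hence $\Phi > 0$, and the Hessian has rank exactly $2$, at every $\lambda \in \Sigma_2 \setminus N^*\mathscr{H}$ lying off the rotation axis $\mathscr{A}$.

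The main obstacle is the axis, where Boyer-Lindquist coordinates and the factor $1/\sin^2\theta$ in $\Phi$ degenerate. I would handle it by combining two ingredients: first, an intrinsic upper bound $\operatorname{rank}\operatorname{Hess}_\lambda(P_0) \leq 2$ valid on all of $\Sigma_2$, which follows from Lemma \ref{involemma} by the general fact that whenever a smooth function and its differential vanish along a codimension-$k$ submanifold, the tangent space of that submanifold is contained in the kernel of the Hessian (here $k=2$); and second, lower semicontinuity of matrix rank together with density of the non-axial part of $\Sigma_2 \setminus N^*\mathscr{H}$ in the whole of $\Sigma_2 \setminus N^*\mathscr{H}$. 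These pin the rank to exactly $2$ at axial points as well. Alternatively one may re-run the Leibniz computation in a regular chart around $\mathscr{A}$, in the spirit of the closure argument already used at the end of Lemma \ref{proporo}.
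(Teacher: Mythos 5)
Your main computation is exactly the paper's: after discarding the non-vanishing prefactor, the Hessian of $h=(\mo_t+\Psi)^2-\Delta\Phi$ on $\Sigma_2$ collapses (because $\mo_t+\Psi$, $\Delta$ and $d\Delta$ all vanish there) to $2\,d(\mo_t+\Psi)\otimes d(\mo_t+\Psi)-2\Phi\,dr\otimes dr$, and the rank is $2$ precisely where $\Phi\neq 0$, which you correctly identify with the complement of $N^*\mathscr{H}$ inside $\Sigma_2$. Your justification that the prefactor is irrelevant (second-order vanishing of $h$ plus Leibniz) and that $\{\Phi=0\}\cap\Sigma_2=N^*\mathscr{H}$ is more explicit than the paper's, and the linear-independence observation for $d(\mo_t+\Psi)$ and $dr$ is right.

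There is, however, one genuine error in your treatment of the axis: lower semicontinuity of rank goes the wrong way for your purposes. Semicontinuity says $\{\operatorname{rank}\ge k\}$ is open, equivalently that the rank can only \emph{drop} at a limit point; so density of the non-axial points where the rank equals $2$ yields only $\operatorname{rank}\le 2$ at axial points (which you already have from the codimension-$2$ vanishing), not the lower bound you need. Compare $\operatorname{diag}(\epsilon,\epsilon)$ as $\epsilon\to 0$. The correct repair is the alternative you mention in passing: pass to a chart regular across $\mathscr{A}$, where $g^{\theta\theta}\mo_\theta^2+g^{\phi\phi}\mo_\phi^2$ extends to the smooth positive-definite transverse quadratic form and $\Phi$ remains strictly positive on the axial part of $\Sigma_2\setminus N^*\mathscr{H}$ (where $\mo_\theta\neq 0$), so the same rank count applies verbatim. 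This is in effect what the paper does when it asserts that the identity $N^*\mathscr{H}=\ker\Phi$ ``extends to the poles''; you should carry that step out rather than rely on the semicontinuity argument.
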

\begin{proof}
Again, we recall the notation from Lemma \ref{proporo}. 
After dividing with a non-vanishing function we may assume that $P \equiv \tilde{P}_0(q,D_q)$ for $\tilde{P}_0 = (\mo_t + \Psi)^2 - \Delta \Psi$. 
One computes
\[
Hess (P_0) |_{\Sigma_2}  = 2 d\mo_t \otimes \partial_{\mo_t} - 2\Phi|_{\Sigma_2} dr \otimes \partial_r.
\]
Away from $ N^* \mathscr{H}= \{ r=r_s/2 , \ \mo_t =  \mo_\theta  = \mo_\phi = 0\} $, the points in $\Sigma_2$ must satisfy $\Phi  \neq 0$. 
In fact, $N^* \mathscr{H} = \text{ker} \Phi$. One checks that this extends to the poles too. 
Hence $\mo_\phi \neq 0$ which implies that the latter term above does not vanish. 
Thus, the claim holds. 
\end{proof}

Below we denote $\dot{T}_{\mathscr{H}}^* (M) := \{  (q,\mo) \in \dot{T}^* (M) : q \in \mathscr{H} \} \supset \Sigma_2 $. 
\begin{lemma}\label{sublemma}
The subprincipal symbol $c_P$ $($see e.g. \cite[Theorem 11.11]{Grigis-Sjostrand}$)$ of $P $ vanishes on $\dot{T}_{\mathscr{H}}^* (M) $. In particular,  $c_P |_{\Sigma_2} = 0$. 
\begin{proof}
It suffices to show this in Boyer-Lindquist coordinates. The general claim then follows by continuity. 
By definition, the subprincipal symbol $c_P$  of 
\[
P :=  \Delta \square_g+ \kappa = \sqrt{- \det g} \Delta g^{\mu \nu } \partial_\mu \partial_\nu +    \Delta \partial_\mu (  \sqrt{-\det g} g^{\mu \nu } )  \partial_\nu  + \kappa
\]
(in coordinates) is
\begin{align}
c_P =  -i \Delta  \frac{\partial}{\partial q^\mu} ( \sqrt{-\det g} g^{\mu \nu} \mo_\nu )  - i   \frac{\partial}{\partial q^\mu}  ( \Delta \sqrt{-\det g} g^{\mu \nu }  \mo_\nu ) . 
 \end{align}
Since the metric stationary and rotationally symmetric, the derivatives above vanish except when $\mu$ corresponds to $r$ or $\theta$. 
Moreover, $g^{r \nu } =  \delta^{r\nu} \frac{\Delta}{\Sigma} $, $g^{\theta \nu } =  \delta^{\theta \nu} \frac{1}{\Sigma} $ and $\sqrt{-\det g} = \Sigma \sin \theta$.  
Thus, 
 \begin{align}
c_P =  -i 6  \sin \theta \  (r-r_s/2)^3 \mo_r   -2 i \Delta  \cos \theta  \mo_\theta   
\end{align}
which vanishes for base points on the horizon $\{ r = r_s/2\} = \{ \Delta = 0\}$. 

\end{proof}
\end{lemma}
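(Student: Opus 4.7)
My plan is to verify the lemma by a direct symbol computation in Boyer-Lindquist coordinates. Because the subprincipal symbol is invariantly defined on $\dot{T}^* M$ (via the half-density identification afforded by the smooth, nonvanishing volume form $dV_g$) and because $\dot{T}^*_{\mathscr{H}} M$ is topologically closed in $\dot{T}^*M$, I will first verify $c_P \equiv 0$ on the dense open subset lying over $\mathscr{H}\setminus \mathscr{A}$, where these coordinates are regular, and then extend to covectors over the axis by continuity --- the same topological closure argument already exploited in Lemma \ref{proporo}.

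In the chart I will expand
\[
P = \sqrt{-\det g}\, \Delta\, g^{\mu\nu} \partial_\mu \partial_\nu + \Delta \, \partial_\mu(\sqrt{-\det g}\, g^{\mu\nu}) \partial_\nu + \kappa
\]
and apply the standard formula
\[
c_P = p_1 - \tfrac{1}{2i} \sum_\mu \partial_{q^\mu}\partial_{\mo_\mu} p_2
\]
for the subprincipal symbol of a second-order differential operator (see \cite[Theorem 11.11]{Grigis-Sjostrand}). The zeroth-order potential $\kappa$ drops out immediately since it sits in degree zero and makes no contribution to the first-order part of the total symbol.

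The decisive structural input is that the Kerr metric is stationary ($\partial_t g = 0$) and axisymmetric ($\partial_\phi g = 0$), and that $\Delta = \Delta(r)$ depends only on $r$. Consequently every $\partial_\mu$ in the subprincipal formula collapses to $\mu \in \{r, \theta\}$. For these indices I will substitute the explicit entries $g^{rr} = \Delta/\Sigma$, $g^{\theta\theta} = 1/\Sigma$, and $\sqrt{-\det g} = \Sigma \sin\theta$; the $\Sigma$-denominators cancel cleanly. The $\mu = r$ contributions each pick up an additional factor $\partial_r \Delta = 2(r-r_s/2)$, so together with the overall $\Delta$ in $P$ they produce an expression proportional to $(r-r_s/2)^3 \sin\theta\, \mo_r$, while the $\mu = \theta$ contribution retains the overall prefactor $\Delta \cos\theta\, \mo_\theta$. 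Both terms vanish on the horizon $\{\Delta = 0\}$, proving the claim on $\dot{T}^*_{\mathscr{H}} M \setminus \dot{T}^*_{\mathscr{A}} M$; continuity and $\Sigma_2 \subset \dot{T}^*_{\mathscr{H}} M$ then finish the proof.

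The only real obstacle will be the careful bookkeeping of signs and of the $\frac{1}{2i}$ factor in the subprincipal formula, together with verifying that the two Killing symmetries indeed eliminate the $\mu = t$ and $\mu = \phi$ summands in both $p_1$ and in $\partial_{q^\mu}\partial_{\mo_\mu} p_2$; the geometric content is essentially forced by the double zero of $\Delta$ on the horizon, so the vanishing of $c_P$ there is structural rather than accidental.
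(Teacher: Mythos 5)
Your proposal is correct and follows essentially the same route as the paper: expand $P$ in Boyer--Lindquist coordinates, apply the standard subprincipal symbol formula, use stationarity and axisymmetry to reduce to the $\mu\in\{r,\theta\}$ terms, substitute $g^{rr}=\Delta/\Sigma$, $g^{\theta\theta}=1/\Sigma$, $\sqrt{-\det g}=\Sigma\sin\theta$, and observe that the surviving terms carry factors $(r-r_s/2)^3$ and $\Delta$, hence vanish on the horizon. The explicit continuity argument for covectors over the axis and the remark that $\kappa$ contributes nothing are both consistent with (and slightly more careful than) the paper's treatment.
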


\section{Proof of Theorem \ref{main1}}\label{mainsection}

We can now construct the parametrix by using the lemmas above. The proof is based on \cite{Uhlmann-Thesis} (see also \cite{Conical-Refraction}). 
The explicit formula \eqref{cano2} is derived as the last part of the proof. 

By Lemma \ref{proporo}, $\Sigma_2$ coincides with the set of double characteristics of $P$. 
In Boyer-Lindquist coordinates,  $P$ admits the principal symbol
\[
P_0 =  \alpha[  ( \mo_t + \Psi)^2 - \Delta \Phi ], 
\]
where $\alpha = \sqrt{- \det g} \Delta g^{tt}$ is non-vanishing and smooth. 
At  $\lambda
 \in \Sigma_2 \setminus N^* \mathscr{H} $ we get $ \Phi(\lambda) \neq 0$ and hence $\Phi \neq 0$ near such a point by continuity. 
Thus, near any point in $\lambda \in \Sigma_2  \setminus N^* \mathscr{H} $,  we can split $P_0$ into
\[
P_0 =  \alpha  P_0^+ P_0^-
\]
where 
\[
P_0^\pm  :=   \mo_t + \Psi \pm  (r-r_s/2)  \sqrt\Phi 
\]
are smooth and positively homogeneous of degree $1$. The coefficient $\alpha$ will not play any role in our analysis and can be omitted. 
More generally, multiplying with any elliptic pseudo differential operator leaves the canonical relation and (operator) wave front set intact. 
It follows from Lemmas \ref{proporo}-\ref{hesslemma} that $\{ P_0^+, P_0^-\}|_{\Sigma_2} = 0$ and that the Hamiltonian vector fields $H_{P_0^+}$, $H_{P_0^-}$ and the cone axis are linearly independent in a conic neighbourhood of $\lambda$.  
Moreover, $\Sigma_1= \{ P_0^+ = 0 \ \text{or} \ P_0^- = 0\}$ and $\Sigma_2 = \{ P_0^+ = 0 =  P_0^-\}$ there. Recall also from Lemma \ref{sublemma} that the subprincipal symbol of $P$ vanishes on $\Sigma_2$ which is required for the Cauchy problem to be uniquely solvable. 
Applying \cite[Proposition 2.2.]{Uhlmann-Thesis}, we transform $P$  
microlocally near given $\lambda$ to the model form 
\[
\tilde{P} = \mathcal{F} P  \mathcal{G} =   D_1 D_2 + A_1 D_1 + A_2 D_2 + R, 
   \quad A_1,A_2, R \in \Psi^0_{cl}(\R^4) , \quad D_\mu := -i\partial_\mu
\]
by conjugating with elliptic Fourier integral operators of order $0$, indicated here by $\mathcal{F},\mathcal{G}$. 
The new local coordinates\footnote{We may assume that $\lambda $ is taken to $ \dot{T}_0^*\R^4$.}  $(z^1,\dots,z^4$, $\eta_1,\dots,\eta_4 )$ in $\dot{T}^* \R^4$ 
are related to the original ones\footnote{E.g. the canonical pairs $q^\mu,p_\nu$ generated by Boyer-Lindquist coordinates $q=(t,r,\theta,\phi)$.} via a local homogeneous canonical transformation.
Moreover, the derivatives $D_1$ and $D_2$ are canonical transformations of $f_+ P_0^+(q,D) $ and $f^- P_-(q,D) $, respectively, in the sense of Egorov's theorem (see \cite[Theorem 10.1.]{Grigis-Sjostrand}) and 
$f^\pm \in S^0_{cl}$ are some homogeneous functions such that 
\[
\{ f^+ P_0^+ , {\color{black}f^- P_0^-} \} = 0, \quad  \text{and} \quad  f^\pm \neq 0
\]
hold in a conic neighbourhood of $\lambda$. Above $\{ \cdot, \cdot \}$ stands for the Poisson bracket in $\dot{T}^* M$. 
A parametrix for the Cauchy problem related to $\tilde{P}$ is an operator $\tilde{E} : \mathcal{E}'(\R^3) \times \mathcal{E}'(\R^3) \to \mathcal{D}' ( \R^4) $ satisfying $\tilde{E} (\tilde{f}_1,\tilde{f}_2) \equiv \tilde{u} \mod C^\infty$, where we identify $  \R^3 = \{ z^1 = 0\} \subset \R^4  $.     This is equivalent, microlocally and modulo $ I^{-\infty}$, to 
$ \tilde{P}  \tilde{E}   \equiv 0$,  
$ ( \tilde{E}, D_1 \tilde{E})  |_{ z^1 = 0  }  \equiv  (I,I) $, 
where 
 $I$ stands for the identity operator. 
 We may rewrite this as the following first order system
\begin{align}
\mathcal{P}  \mathcal{E}  \equiv 0,  \quad 
 \mathcal{E} |_{z^1 = 0} \equiv \mathcal{I},  \label{caussif}
\end{align}
where $\mathcal{E}$ is a $2\times2$ matrix with operator indices, 
\[
 \mathcal{P} : = 
\begin{bmatrix} 
D_2 + A_1 & A_2 D_2 + R \\
-I & D_1 
\end{bmatrix},
\quad 
 \mathcal{I} :=  \begin{bmatrix} I & 0 \\ 0 & I  \end{bmatrix}.
%
\]
The two representations are linked via  $\tilde{E}(\tilde{f}_1,\tilde{f}_2)  = \mathcal{E}^{22} \tilde{f}_1 + \mathcal{E}^{21} \tilde{f}_2  $. 
Further, the Cauchy problem \eqref{caussif} admits (see \cite[Section 3]{Uhlmann-Thesis}) the following model form
\begin{align}
\tilde{\mathcal{P}}  \tilde{\mathcal{E} } \equiv 0,  \quad 
 \tilde{\mathcal{E}} |_{x^0 = 0} \equiv \mathcal{I},  \label{caussimo}
\end{align}
%
\begin{equation}\label{2194sdddgg}
\tilde{\mathcal{P} } \equiv 
\begin{bmatrix} 
D_0-D_1 & 0 \\
0 & D_0
\end{bmatrix}
\mod  \mathcal{M}_{2\times 2} ( C^\infty( \R ;   \Psi^0 (\R^3)) ) 
\end{equation}
which is obtained by applying elliptic pseudodifferential operator matrices of degree $0$.  
The local canonical coordinates 
\begin{equation}\label{Mia_coordinates}
(x,\xi) =  (\overbrace{x^0,x'}^{x}, \overbrace{\xi_0,\xi'}^\xi ) =
(x^0,\overbrace{x^1,x^2,x^3}^{x'},\xi_0,\overbrace{\xi_1,\xi_2,\xi_3}^{\xi'})  \in  U \subset \dot{T}^* \R^4 
\end{equation}
of $\dot{T}^* M$ 
used here are uniquely determined by 
\[
\begin{cases}
x^0 = z^1,\\ x^1 = z^1 - z^2, \\ x^2=z^3,  \\ x^3=z^4. 
\end{cases}
\]
%
%
Indeed, the change of coordinates formula $ \frac{\partial  }{\partial x^\nu } =  \frac{\partial z^\mu }{\partial x^\nu }   \frac{\partial  }{\partial z^\mu }$ implies $\xi_\nu =  \frac{\partial z^\mu }{\partial x^\nu } \eta_\mu$ which gives 
\begin{equation}\label{dosw22}
\begin{cases}
\xi_0 = \eta_1 + \eta_2,  \\
\xi_1 = -\eta_2,\\
\xi_2 = \eta_3,\\
\xi_3 = \eta_4. 
\end{cases}
\end{equation}
Parametrix for the system \eqref{2194sdddgg} is constructed in \cite[Section 4]{Uhlmann-Thesis}. The oscillatory integrals (\ref{cicada1}-\ref{cicada3}) follow from (4.4), (4.5), (4.6) in the reference. 
The phases in \eqref{cicada1} and \eqref{cicada2}  are
\begin{align}
&\varphi_1 = (x'-\tilde{x}')\cdot \xi', \label{varwp} \\ 
&\varphi_2 = x^0 \xi_1 +  (x'-\tilde{x}')\cdot \xi', \label{varwp1}
\end{align}
these generate the canonical relations 
\begin{align}
& \Lambda_{\varphi_1}' = \{ (x,\xi,\tilde{x}', \tilde\xi' ) :      x' = \tilde{x}' , \ \xi_0 = 0 , \  \xi' = \tilde{\xi}' \} \\ 
&\Lambda_{\varphi_2}' = \{ (x,\xi,\tilde{x}', \tilde\xi' ) :     x^1 +x^0 = \tilde{x}^1  , \ (x^2,x^3 ) = (\tilde{x}^2,\tilde{x}^3),  \ \xi_0 = \xi_1 , \  \xi' = \tilde{\xi}' \} 
\end{align}
It is straightforward to check that these coincide with 
$C_{D_0}$ and $C_{D_0 - D_1}$ generated by $\xi_0$ and $\xi_0 - \xi_1$, respectively. In conclusion, the terms (\ref{cicada1}) and (\ref{cicada2}) lie in 
$I^{a} ( C_{D_0} )  $ and $I^{a} ( C_{D_0-D_1} )  $, where $a = 0 + {\color{black}3}/2 - 7/4 = {\color{black}-1}/4$. In particular, they lie in $I^{{\color{black} 1/4, -1/2 }} ( C_{D_0}, \Lambda_{D_1} \circ C_{D_0} )  $ and $I^{{\color{black}1/4,-1/2}} ( C_{D_0-D_1}, \Lambda_{D_1} \circ C_{D_0-D_1} )  $, respectively. {\color{black}This follows from \cite[eq. (1.4)]{Greenleaf-Uhlmann}. }
  %
 %
 %
 %
 Let us study the term (\ref{cicada3}). 
\begin{lemma} 
The term (\ref{cicada3}) lies in the operator class $I^{{\color{black} 1/4, -1/2 }}(C_{D_0}, \Lambda_{D_1} \circ C_{D_0} ) + I^{{\color{black} 1/4, -1/2 }}(C_{D_0-D_1}, \Lambda_{D_1} \circ C_{D_0-D_1} ) $. 
\end{lemma}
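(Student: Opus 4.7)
My approach will be to decompose the compactly-supported $r$-integration in \eqref{cicada3} into a difference of two one-sided Heaviside integrals, anchored respectively at the two endpoints of $[-x^0,x^0]$, and then to recognise each of the two resulting kernels as a paired Lagrangian distribution in the Melrose-Uhlmann / Greenleaf-Uhlmann sense. Geometrically, the endpoint at $s=0$ of the reduced integration variable will produce the Lagrangian $C_{D_0}$, the endpoint at $s=x^0$ will produce $C_{D_0-D_1}$, and the open interior of the integration will in both cases produce the common flowed-out Lagrangian, which one checks is $\Lambda_{D_1}\circ C_{D_0} = \Lambda_{D_1}\circ C_{D_0-D_1}$.

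Concretely, the first step is the substitution $s=(r+x^0)/2$, which turns \eqref{cicada3} into
\[
E_3^j(x,y') = 2\int_0^{x^0}\int_{\R^3} e^{is\zeta_1 + i(x'-y')\cdot\zeta'}\,c^j(x,\zeta')\,d\zeta'\,ds
\]
(the sign of $x^0$ only affects the orientation of the $s$-interval and can be absorbed symmetrically). Writing $\mathbf{1}_{[0,x^0]}(s) = H(s) - H(s-x^0)$, interchanging the order of integration, and applying the distributional identity $\int_0^\infty e^{is\zeta_1}\,ds = i(\zeta_1 + i0)^{-1}$, I decompose $E_3^j = A_1 - A_2$ with
\[
A_1(x,y') = 2i\int_{\R^3} \frac{c^j(x,\zeta')}{\zeta_1+i0}\,e^{i(x'-y')\cdot\zeta'}\,d\zeta', \qquad A_2(x,y') = 2i\int_{\R^3} \frac{c^j(x,\zeta')}{\zeta_1+i0}\,e^{ix^0\zeta_1 + i(x'-y')\cdot\zeta'}\,d\zeta'.
\]
The phases here coincide with $\varphi_1$ in \eqref{varwp} and $\varphi_2$ in \eqref{varwp1}, which generate $C_{D_0}$ and $C_{D_0-D_1}$ respectively.

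To identify each $A_i$ as an element of the Greenleaf-Uhlmann paired class I would match the above oscillatory integrals to the model form by declaring the ``inner'' phase variables to be $(\zeta_2,\zeta_3)\in\R^{k_1}$ with $k_1=2$ and the ``transverse'' variable to be $\zeta_1\in\R^{k_2}$ with $k_2=1$, with ambient dimension $n=7$. A cutoff near $\zeta_1=0$ splits the distributional amplitude $(\zeta_1+i0)^{-1} c^j$ into a smooth piece which, since $c^j\in S^1$, is directly verified to lie in the class $S^{1,-1}(\R^7\times\dot\R^2\times\R)$, and a compactly-$\zeta_1$-supported piece whose inverse Fourier transform in $\zeta_1$ produces a Heaviside-type conormal factor that also fits into the same paired class. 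Inverting the Greenleaf-Uhlmann order relations
\[
\mu_1 = m_1 - k_1/2 + n/4, \qquad \mu_2 = m_2 - k_2/2
\]
with $\mu_1 = 1$ and $\mu_2 = -1$ produces $m_1 = 1/4$ and $m_2 = -1/2$, placing $A_1$ in $I^{1/4,-1/2}(C_{D_0},\Lambda_{D_1}\circ C_{D_0})$ and $A_2$ in $I^{1/4,-1/2}(C_{D_0-D_1},\Lambda_{D_1}\circ C_{D_0-D_1})$, which gives the claim.

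The principal obstacle will be the rigorous distributional treatment of the amplitude $(\zeta_1+i0)^{-1}$, since it is not a classical smooth symbol and one therefore cannot invoke the Greenleaf-Uhlmann model directly. The cutoff splitting above reduces the analysis to a smooth-symbol paired oscillatory integral plus a conormal Heaviside contribution; passing to a genuine paired Lagrangian distribution then relies on the clean intersection of the two Lagrangians in a codimension-one submanifold, a fact that follows from the explicit parametrisations \eqref{cano2} and the composition rules already recorded in Section~\ref{mainsection}. Once this technical point is handled, the order bookkeeping via the Greenleaf-Uhlmann formulas is routine and the lemma follows.
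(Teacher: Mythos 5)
Your proposal follows essentially the same route as the paper's proof: reduce the $r$-integration to a difference of Heaviside terms, Fourier transform to produce the phases $\varphi_1,\varphi_2$ generating $C_{D_0}$ and $C_{D_0-D_1}$, cut off near $\zeta_1=0$, verify that the high-frequency amplitude lies in $S^{1,-1}$, and recover the orders $(1/4,-1/2)$ from the Greenleaf--Uhlmann model with $k_1=2$, $k_2=1$, $n=7$ --- all of which matches the paper. The one organizational difference is that the paper keeps the two Heaviside pieces together near $\zeta_1=0$, where the boxcar's Fourier transform $\sin(x^0\zeta_1)/\zeta_1$ is smooth, so the low-frequency remainder is an honest $S^{-\infty}$ symbol yielding a classical term in $I^{-1/4}(C_{D_0})$; your version splits first and so must handle the genuinely singular factor $\chi(\zeta_1)(\zeta_1+i0)^{-1}$ in each term separately, which is workable but requires the conormal argument you only sketch.
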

\begin{proof}
{\color{black}
 First of all, notice that $\int_{-x^0}^{x^0} \cdots dr$ is integration over the boxcar function $B(r/x_0) =  H(r+x^0) - H(r-x^0)$. 
 This function has (unitary) Fourier transform 
 \begin{align}
  \widehat{B( \cdot /x_0) } ( \omega)  = \widehat{H( \cdot+x^0)} - \widehat{H(\cdot-x^0)} = \sqrt{\frac{\pi}{2}} e^{-ix^0 \omega } \left(\frac{1}{i \pi \omega} + \delta(\omega) \right)   - \sqrt{\frac{\pi}{2}} e^{ix^0 \omega } \left(\frac{1}{i \pi \omega} + \delta(\omega) \right) 
 \\
  =  \sqrt{\frac{\pi}{2}} (e^{-ix^0 \omega }- e^{ix^0 \omega }) \frac{1}{i \pi \omega}.
  \end{align}
This is actually smooth since the boxcar function is compactly supported. Indeed, $e^{-ix^0 \omega }- e^{ix^0 \omega } = -2i \sin (x^0 \omega)$ cancels out the blowup in $\frac{1}{i \pi \omega}$. 
Multiplying by $1 = \chi(-2\omega) + (1- \chi(-2\omega) )$, where $\chi \in C^\infty_c(\R)$ is a smooth cut-off that equals $1$ near the origin, splits the Fourier transform into
  \[
   \widehat{B( \cdot /x_0) } ( \omega)   =  \sqrt{\frac{\pi}{2}} (e^{-ix^0 \omega }- e^{ix^0 \omega }) \frac{ 1- \chi(-2\omega)  }{i \pi \omega}-  \sqrt{\frac{2}{\pi}} \sin(x^0 \omega) \frac{ \chi(-2\omega)  }{  \omega} , 
  \]
The latter term is in $C^\infty_c $ as a function of $\omega$. 
Consequently,
\begin{equation}\label{123388}
\int_{-x^0}^{x^0} e^{i\frac{r+x^0}{2}\xi_1}  dr  =  \sqrt{2\pi} e^{i\frac{x^0 \xi_1}{2}}  \widehat{B( \cdot /x_0) } (-\xi_1/2 )  = 2    ( 1- \chi(\xi_1) )   \frac{ e^{ ix^0 \xi_1 }}{i  \xi_1} - 2   (1- \chi(\xi_1) )  \frac{1}{i  \xi_1} + d(x^0,x^1,\xi_1) ,
\end{equation}
where $ d  \in  S^{-\infty} ( \R^2 \times \R)$ is a shorthand for $d=  2 e^{i\frac{x^0 \xi_1}{2}}   \sin(x^0 \xi_1 ) \frac{ \chi(\xi_1)  }{  \xi_1} $. 
By substitution, 
 \begin{align}
 \int_{-x^0}^{x^0} \int_{\R^3} e^{i\frac{r+x^0}{2}\xi_1 + i(x'-\tilde{x}')\cdot \xi'   }  c^j(x,\xi') d\xi' dr    = -2 \int_{\R^3} e^{i \varphi_1 }   \frac{   1- \chi(\xi_1)   }{i  \xi_1} c^j(x,\xi') d\xi'   \label{korp1}  \\
 +2 \int_{\R^3} e^{ i \varphi_2 }   \frac{   1- \chi(\xi_1)   }{i  \xi_1} c^j(x,\xi') d\xi'     + \int_{\R^3} e^{ i\varphi_1 }   d(x^0,x^1,\xi_1) c^j(x,\xi') d\xi'    \label{korp2}
 \end{align}
where the phases $\varphi_1 = \varphi_1 (x',\tilde{x}', \xi')$  and $\varphi_2 = \varphi_1 (x^0,x',\tilde{x}', \xi')$ are as in \eqref{varwp} and \eqref{varwp1}.
In coordinates $z = (z',z'',z''') $ such that $z'= (x^2 -\tilde{x}^2 , x^3 - \tilde{x}^3)$ and $z'' = x^1- \tilde{x}^1$, the first integral on the right hand side takes the form
 \begin{align}
  \int_{\R^3} e^{iz' \cdot \zeta' +   z'' \zeta'' }    \frac{   1- \chi(\zeta'')   }{ i\zeta''} \tilde{c}^j(z ,\zeta',\zeta'') d\zeta_1    d\zeta_2    d\zeta_3     \label{modelo}
 \end{align}
 where $\tilde{c}^j \in S^{1}$. The amplitude $  \frac{   1- \chi(\zeta'')   }{ i\zeta''} \tilde{c}^j(z ,\zeta',\zeta'') $ lies in the class $S^{1,-1}$. 
 Indeed, it obeys the estimate
 \begin{align}
 \left| \partial_{z}^\nu \partial_{\zeta''}^{\mu} \partial_{\zeta'}^\rho   \left(   \frac{   1- \chi(\zeta'')   }{ i\zeta''} \tilde{c}^j(z ,\zeta',\zeta'') \right)   \right| 
 \leq\sum_{ \mu_1+ \mu_2 = \mu }   \left| \partial_{\zeta''}^{\mu_1}   \left(   \frac{   1- \chi(\zeta'')   }{ i\zeta''}  \right) \right|  \left|  \partial_{z}^\nu  \partial_{\zeta''}^{\mu_2} \partial_{\zeta'}^\rho  \tilde{c}^j(z ,\zeta',\zeta'')   \right|
 \\
 \leq C \langle \zeta''\rangle^{-1-|\mu_1| } \langle \zeta',\zeta'' \rangle^{1-|\rho|- |\mu_2|} \leq  C \langle \zeta''\rangle^{-1-|\mu| } \langle \zeta',\zeta'' \rangle^{1-|\rho|}
 \end{align}
 in a compact set. 
 Thus, the integral \eqref{modelo} is of the model form for an element in $I^{1,-1} ( S_1 ,S_2)  $, where $S_2 := \{ z' = 0 = z'' \} \subset S_1 := \{ z' = 0 \} $. 
 As shown in \cite[eq. below (1.4)]{Greenleaf-Uhlmann}, the space $I^{1,-1} ( S_1 ,S_2)$ equals $I^{1/4, -1/2} (N^*S_2,N^*S_1)$ which 
 is is just $I^{1/4, -1/2 }(C_{D_0}, \Lambda_{D_1} \circ C_{D_0} )$ in different coordinates. 
Applying an analogous argument, one shows that the second term on the right hand side in (\ref{korp1}-\ref{korp2}) lies in $I^{ 1/4, -1/2 }(C_{D_0-D_1}, \Lambda_{D_1} \circ C_{D_0-D_1} )$. 
The last term $ \int_{\R^3} e^{ i\varphi_1 }   d(x^0,x^1,\xi_1) c^j(x,\xi') d\xi' $ lies in the standard Hörmander class $I^{-1/4} ( C_{D_0})  \subset I^{1/4, -1/2 }(C_{D_0}, \Lambda_{D_1} \circ C_{D_0} )$.
Indeed, $d \in S^{-\infty} ( \R^2 \times \R)$ and $c \in S^1( \R^4 \times \R^3)$, so the product $  d(x^0,x^1,\xi_1) c^j(x,\xi')$ is an element in $S^1( \R^4 \times \R^3)$. 
This finishes the proof. 
}
\end{proof}

We shall now derive the formula \eqref{cano2} regarding $\Lambda_{D_1} \circ  C_{D_0}$ and $\Lambda_{D_1} \circ  C_{D_0-D_1}$ in Boyer-Lindquist coordinates. 
Writing $C_{D_0}$ and $C_{D_0-D_1}$ in these coordinates shows that they are characteristic flowout canonical relations generated by 
\[
\xi_0 =  \eta_1 +\eta_2  =  f^+ P^+_0 + f^- P^-_0 = (f^++ f^-) (p_t + \Psi) + (r-r_s/2) (f^+-  f^-) \sqrt{\Phi}
\]
 and 
\[
\xi_0 - \xi_1 = \eta_1 + 2\eta_2  =f^+ P_0^+ + 2 f^-P^-_0 = (f^++ 2 f^-)  (p_t + \Psi)+ (r-r_s/2) (f^+- 2 f^-) \sqrt{\Phi},
\]
respectively. Above we have omitted the coordinate transformations. Moreover, $\Lambda_{D_1} = \Lambda_{-D_1}$ is the characteristic flowout canonical relation generated by 
\[
-\xi_1 = \eta_2 = f^- P^-_0 =  f^- (p_t + \Psi) +f^- (r-r_s/2)  \sqrt{\Phi}.  
\]
%
Each one of the relations above are generated by a function of the form $\mathcal{H} :=  (p_t + \Psi) + \alpha (r-r_s/2)  \sqrt{\Phi} $, where $\alpha$ is non-vanishing.\footnote{In fact, $\alpha =1 $ for the last one. This implies that $\Lambda_{D_1} = \Lambda_{P^-_0} $.}
Indeed, we may assume that $f^\pm$ are both positive and then divide with the coefficient in front of $(p_t + \Psi)$. This does not change the associated canonical relation. 
Let us now compute the Hamiltonian vector field generated by $\mathcal{H}$. It suffices to focus on $\Sigma_2$ as 
\begin{align}
\Lambda_{D_1} \circ  C_{D_0} \subset \{ \xi_1 = 0 = \xi_0 \}^2  \subset \{   f^- P^-_0  =  0, \  f^+ P^+_0 + f^- P^-_0 = 0\}^2 = \{ P^-_0  = 0 = P^+_0\}^2 = \Sigma_2, \\
\Lambda_{D_1} \circ  C_{D_0-D_1} \subset \{ \xi_1 = 0 = \xi_0-\xi_1 \}^2  \subset \{   f^- P^-_0  =  0, \  f^+ P^+_0 + 2f^- P^-_0 = 0\}^2 = \{ P^-_0  = 0 = P^+_0\}^2 = \Sigma_2. 
\end{align} 
A straightforward computation gives 

\begin{align}
\{\mathcal{H}, \cdot \} |_{\Sigma_2}   = \left( \frac{\partial}{\partial t} + \frac{\partial \Psi }{\partial \mo_\phi } \frac{\partial}{\partial \phi} - \frac{\partial \Psi }{\partial r } \frac{\partial}{\partial \mo_r}- \frac{\partial \Psi }{\partial \theta } \frac{\partial}{\partial \mo_\theta}  \right) \Big|_{\Sigma_2}  +  \alpha \sqrt{\Phi} \frac{\partial}{\partial \mo_r}  \Big|_{\Sigma_2}   
=  \left( \frac{\partial}{\partial t} + \frac{c }{ r_s } \frac{\partial}{\partial \phi} +h \frac{\partial}{\partial \mo_r}  \right)  \Big|_{\Sigma_2} ,
\end{align}
for some function $h$.  
This admits integral curves which (as sets) are  of the form 
\[
\{ (t + s, r_s/2 , \theta , \phi  +  \frac{c  }{ r_s } s  \ ; \  -\frac{c  }{ r_s } \mo_\phi , b (s  ) , \mo_\theta, \mo_\phi) : s \in \R \} , 
\]
for arbitrary $ (t,  \theta,\phi  , \mo_r, \mo_\theta, \mo_\phi) $, where $b(s) = b(s, t,  \theta,\phi  , \mo_r, \mo_\theta, \mo_\phi) $ is some smooth function with $\dot{b} = h$ along the curves. 
The curves with $\xi_0$, $\xi_0 - \xi_1$ and $\xi_1$ as the generating function $\mathcal{H}$ differ only in $b$. 
By combining the paths above we conclude \eqref{cano2}. This finishes the proof. 

\bibliographystyle{plain}
\bibliography{grabib1_ref}

\end{document}